\author{Matthieu Arfeux\\ {\small matthieu.arfeux@stonybrook.edu}}
\title {Berkovich spaces and Deligne-Mumford compactification}
\newtheorem{theorem} {Theorem}[section]
\newtheorem{proposition}[theorem]{Proposition}
\newtheorem{lemma}[theorem]{Lemma}
\newtheorem*{lemma*}{Lemma}
\newtheorem{corollary}[theorem]{Corollary}
\newtheorem{definition}[theorem]{Definition}
\newtheorem*{conjecture*}{Conjecture}
\newtheorem{example}[theorem]{Example}
\theoremstyle{plain}
\newtheorem*{definitionint}{Definition}
\newtheorem{noteint}{Note}
\theoremstyle{remark}
\newtheorem{remark}[theorem]{Remark}
\theoremstyle{remark}
\renewenvironment{proof}{\noindent{\bf Proof. }}{\hfill{$\square$} \vskip.3cm}
\def\cal{\mathcal}
\def\C{{\mathbb C}}
\def\F{{\mathcal F}}
\def\LL{{\mathbb L}}
\def\N{{\mathbb N}}
\def\P{{\mathbb P}}
\def\Q{{\mathbb Q}}
\def\R{{{\mathbb R}}}
\def\S{{\mathbb S}}
\def\St{{\mathcal S}}
\def\T{{\mathcal T}}
\def\Z{{\mathbb Z}}
\def\Aut{{\rm Aut}}
\def\Rat{{\rm Rat}}
\def\rat{{\rm rat}}
\def\Mod{{\rm Mod}}
 \def\epsilon{{\varepsilon}}
\def\card{{\rm card}}
\def\deg{{\rm deg}}
\begin{document}

\maketitle

\begin{abstract}
We explicit the relation between the dynamics the Berkovich projective line over the completion of the field of formal Puiseux series and the space dynamical systems between trees of spheres known to be equivalent to the Deligne-Mumford compactification of the Moduli space of marked spheres.
\end{abstract}



\medskip
\noindent{\textbf{About this preprint.}}This preprint does not expose any new result. It may be completed later in a version that would contain a description of its relation between different other works and also concrete application to dynamics but the author wanted to make it quickly available. The readers are very welcome to mail any comment that could improve this preprint.

\section{Introduction}

\medskip
\noindent{\textbf{Motivations.}}

Who in holomorphic dynamics did not hear about Berkovich spaces and about Deligne-Mumford compactification? Indeed these tools appear more and more in the literature these last years, explicitly as in \cite{K1}, \cite{DF}, \cite{HK}, \cite{S}, \cite{A0} for example or implicitly as in DeMarco-McMullen's trees \cite{DM}, Shishikura's trees \cite{Sh1}, etc... It would really be a very long task to list where these tools are used! 
However, it is often painful for people to be introduced in these topics because the main ideas are often hidden under a heavy formalism that has to be introduced in order to state corrects results. 

What if I tell you that in fact these tools behind their respective formalisms are very simple and that they are also related?
One could think that explaining the relation would be very painful and that if it is already hard to learn about one of these topics, it would be crazy to try to understand both of them at the same time. But one would be wrong, as this is exactly the goal of this paper:
\begin{itemize}
\item introducing both of these topics at the same time,
\item explaining the main ideas behind them, 
\item expliciting the relation between them, and
\item try to be understandable by a beginner in both of these fields.
\end{itemize}

First let us recall what I mean by holomorphic dynamics.
Let us denote by $\S:=\P^1\C$ the Riemann sphere. According to the Uniformization Theorem, every compact surface of genus $0$ with a projective structure is isomorphic to $\S$. For $d\geq 1$, we denote by $\Rat_d$ the set of rational maps $f:{\mathbb S}\to {\mathbb S}$ of degree $d$. We want to understand the behavior of these maps under iteration. Hence we introduce the natural action by conjugacy of the set $\Aut(\S):=\Rat_1$ of Moebius transformations on $\Rat_d$ :
\[\Aut(\S)\times \Rat_d\ni (M,f)\mapsto M\circ f\circ M^{-1}\in \Rat_d.\]  
We want to understand the dynamical properties of $\rat_d$, the quotient of $\Rat_d$ by this action. 
Here we are going to focus on the study of diverging sequences of $\rat_d$.

Berkovich spaces and the Deligne-Mumford compactification are very general tools that can be used in very general settings but for our purpose we will restrict our use to the completion of the field of formal Puiseux series for Berkovich spaces and to the case of genus zero (moduli space of punctured spheres) for the Deligne-Mumford compactification. 

\begin{noteint}For the Berkovich space, we will recall what is this field later (as late as possible). This will restrict our application of Berkovich spaces in dynamics (for example we will not be able to explain how they are used in \cite{BD} or in \cite{FG}). 
\end{noteint}

\begin{noteint}For the Deligne-Mumford compactification, a new vocabulary with dynamical systems between trees of spheres, that was claimed to be more adapted for the use in holomorphic dynamics, has been introduced in \cite{A0} (translated in English in \cite{A1}, \cite{A2}, \cite{A3}). We will directly use it without going back to the formalism of Deligne-Mumford. 
\end{noteint}

In \cite{K2}, the author explain how to use Berkovich space ideas in order to deduce properties of diverging sequences in $\rat_d$ and the vocabulary in \cite{A0} has been based on this idea. One of the goal of this work is also to explicit here the bridge between the two formalism in order to allow later to use this bridge the reverse way. This paper is based on some remarks sketched in \cite{A0}.


\medskip
\noindent{\textbf{A problem of normalization.}}

As we said, the space we are interested in, $\rat_d$, is a quotient space and it is usually convenient to use representatives instead of classes. The choice of representatives (often called normalizations) is usually a problem when one want to study the behavior in the boundary of the space.
For example, it can happen that a sequence diverges in $\Rat_d$ and at the same time converges in $\rat_d$.

When we go to infinity we always have some critical points collapsing but the reverse can be false. Intuitively it is natural to think that one could understand the behavior of a diverging sequence by trying to understand how the critical points are collapsing together. That's why it is natural to keep trace of the critical points. Of course, as we are doing dynamics, we will also look at the behavior of periodic orbits.
To keep trace of these elements, we "mark" them on the sphere.
Given a rational map $f\in\Rat_d$ and a finite set $X$ of elements which are meaningful for us (critical points, orbits,...), we define a sphere marked by $X$ to be an injection $i:X\to\S$. 

We are interested in $[f]\in\rat_d$ so the space that we will consider is not the space of injections but its quotient by the corresponding action: the injection $i$ will be considered modulo post composition by a Moebius transformation.

\begin{definitionint}For a finite set $X$ containing at least three elements, the space $\Mod_X$ is the space of injections of $X$ in $\S$ modulo post composition by Moebius transformations and is called the moduli space of marked spheres.
\end{definitionint}

For example, when $\card X=3$ the space $\Mod_X$ has a unique element because the action of the Moebius transformation on the subsets of three points of $\P^1\C$ is transitive. In particular, $\Mod_X$ is compact in this case.
For $\card X>3$ this is not the case and this space has been well studied by people in algebraic geometry. There is a natural compactification of $\Mod_X$ called the Deligne-Mumford compactification.

\medskip
\noindent{\textbf{Another point of view on holomorphic families.}}

Let's go back to our initial problem but with a different point of view. Consider an holomorphic family 
$$F_t(z):=\frac{a_d(t)z^d+\ldots+a_1(t)z+a_0(t)}{b_d(t)z^d+\ldots+b_1(t)z+b_0(t)},\quad t\in D({0,1}).$$
 For the study of the boundary of $\rat_d$, we will consider that $f_t\in\Rat_d$ if and only if $t\neq0$.

By holomorphic family we mean that the $a_i(t)$ and $b_j(t)$ are Laurent series in $t$ without essential singularities.
The other idea is to consider that instead of having a family of rational map with coefficients in $\C$, we can think of $F_t$ as only one rational map whose coefficients are Laurent series without essential singularities or in another field that contains them. We denote by $\LL$ a field that contains these series and that will be specified when we will really need it. The only thing that the reader should keep in mind is that even if this field is really bigger than what we are used to see in holomorphic dynamics, the only interesting property for us is that it our series. 

Thus what we will be considering is a rational map

$$F_t(z):=\frac{a_dz^d+\ldots+a_0}{b_dz^d+\ldots+b_0}\in \LL(z). $$

The field $\LL$ has the particularity to be equipped by a non-Archimedean norm (to be defined later). In this case, there is an interesting construction existing over this field which is called a Berkovich space. We will see that this space has a natural tree structure and that the map $F_t$ acts naturally on this tree which provides a dynamical system giving information about how the family $[f_t]\in\rat_d$ diverges when $t\neq 0$. 


\medskip
\noindent{\textbf{Outline.}}
This paper will try to be pedagogic. Hence will first make an exposition of the comparison of these two notions emphasizing the ideas and forgetting a little about the Berkovich general formalism that will be introduced in Section \ref{Chap7}. As in order to do dynamics we first need a space, then a map and finally a way to iterate, the structure of this paper will be the sequel: spaces Section \ref{Chap2}, maps Section \ref{Chap3} and dynamics Section \ref{Chap4}. Then in Section \ref{Chap5} we will compare the approaches underlying advantages and disadvantages of these two ones. Finally we will 
have Section \ref{Chap7} as already explained.


\medskip
\noindent{\textbf{Acknowledgments.}} This paper follows from discussions with Jan Kiwi during my PhD. I want to thank also Charles Favre, Mattias Jonsson and Juan Rivera-Letelier for discussion that allowed me to go further in the Berkovich formalism.


\section{Spaces}\label{Chap2}

\subsection{Trees of spheres on an example}

In this section we forgot about rational maps and fix a set $X$ containing at least three elements. As we saw above, we are interested in understanding the diverging sequences in $\Mod_X$.

For simplicity, let us first suppose that $\card X=4$ (we already saw that for ${\card X=3}$ the space is compact). We set $X=\{a,b,c,d\}$.
Consider a sequence $[i_n]$ without converging subsequences in $\Mod_X$. The natural reflex when we are working modulo Moebius transformation, is to post-compose $i_n$ by such a transformation $M_n$ in order to control the position of three points. We call $M_n\circ i_n$ a normalization of the marked sphere.

 For example 
$M_n\circ i_n$ maps $a$ to $0$, $b$ to $1$ and $c$ to $\infty$.
Using the compactness of $\S$, we can suppose (maybe after extracting a subsequence) that the sequence $M_n\circ i_n(d)$ converges. As $[i_n]$ diverges, we deduce that the limit is $0,1$ or $\infty$. Suppose for example that $\lim M_n\circ i_n(d)=\infty= M_n\circ i_n(c)$.

 A naive way to think would be to define the limit of $[i_n]$ to be the application that maps $a$ to $0$, $b$ to $1$ and both $c$ and $d$ to $\infty$ (i.e. the limit of $M_n\circ i_n$). Indeed, if we take another normalization, i.e. another sequence of Moebius transformation $N_n$ that maps now $a$ to $0$, $c$ to $\infty$ and $d$ to $1$, one can see that this process would give a different limit for $[i_n]$.

It is an easy exercise to check that if we choose another sequence of representative such that the image of three elements of $X$ are constant we have a limit with differs only from the two previous one by a Moebius translation. So a first idea is to think that the limit that we define for $[i_n]$ is going to be the collection of these two possible limits.

The second idea is to remark that these two limits are related. Indeed, if for example $M_n\circ i_n(d)=n$ then we can compute that $N_n\circ M_n^{-1}(z)= z/n$ and it follows that $N_n\circ i_n(b)=1/n\to0=N_n\circ i_n(a)$. Hence this collection of limits comes with some rigidity. This is why we introduce trees: in order to give a simple visualization of this rigidity. In this example we identify the limit of $[i_n]$ to a tree of sphere (that we will define later) as on  Figure \ref{dmpp}. 

 \begin{figure}
  \centerline{\includegraphics[width=14cm]{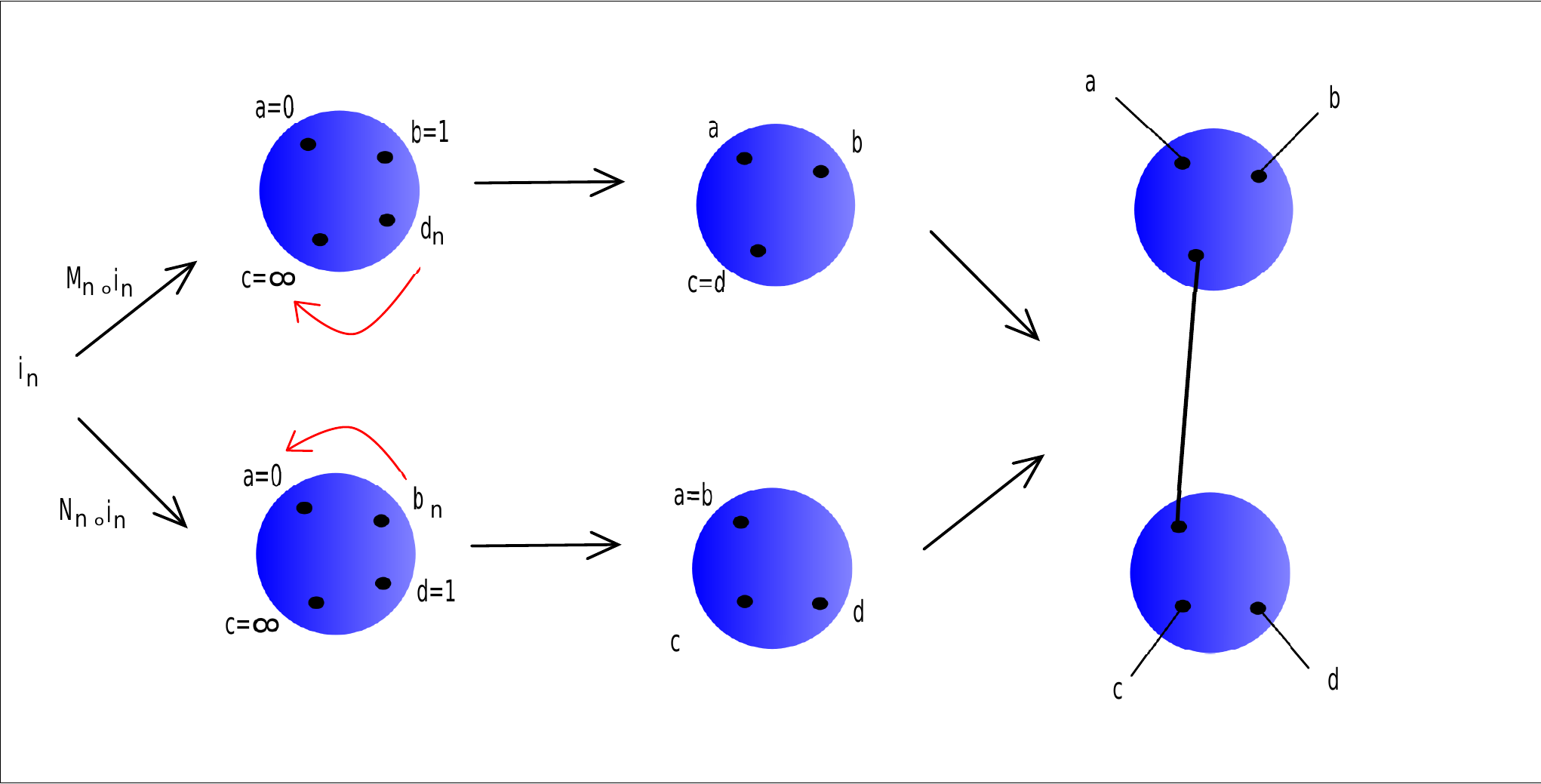}} 
   \caption{Illustration of the convergence to a tree of spheres.}
\label{dmpp} \end{figure}

\begin{remark} \label{zoomstof}
When we consider the normalization $M_n\circ i_n$, we see that the images of $c$ and $d$ are collapsing so intuitively, considering the normalization $N_n\circ i_n$ corresponds to zooming or rescaling at the level of these two points. 
\end{remark}
 
 Before developing the definition of trees of spheres in the next subsection, let us first describe this example.
The tree of sphere on Figure \ref{dmpp} has 5 edges, and 6 vertices. There are two categories of vertices: elements of $X$ at the ends (or leaves) of the tree and spheres at the interior and on which are attached edges. At each internal vertex $\S_v$ the set of branches induces a partition of $X$. Hence this naturally provides an application $X\to\S_v$ that maps each element of $x$ to the attaching point of the branch to which it belongs. The two spheres provide two applications that correspond exactly to the ones associated to $M_n$ and $N_n$. Note that from the definition, both of the spheres have at least three different marked points. Of course, in order to have a well define and unique limit we have to consider these trees modulo a natural action of isomorphism.

\subsection{Limits and trees of spheres}

As we saw above, the structure of the trees we are going to define is essentially combinatorial. It is important as it encodes a rigidity that is not obvious to describe with a simple vocabulary. That's why we define everything in order to make easy to talk about it.

A graph is the disjoint union of a finite set $V$ called set of vertices and an other finite set $E$ consisting of elements of the form $\{v,v'\}$ with different $v,v'\in V$.
We say that $\{ v,v'\}$ is an edge between $v$ and $v'$. For all $v\in V$ we define $E_v$ the set of edges containing $v$.
We call valence of $v$ the cardinal of $E_v$.

\begin{definition} A (stable) tree is a connected graph without cycle and whose internal vertices have at least valence $3$. 
\end{definition} 

For every vertex $v$ and every edge $e$ adjacent to $v$, we denote by $B_v(e)$ the branch on $v$ that contains $e$, ie the connected component of $T\setminus\{v\}$ that contains $e$.


\begin{definition}
A tree of sphere ${\cal T^X}$ (marked by $X$) is  the data of: 
\begin{itemize}
\item a combinatorial tree $T^X$ whose leaves are the elements of $X$ and 
\item for every internal vertex $v$ of $T^X$, \begin{itemize}
    \item a Riemann sphere $\S_v$ and
    \item a one-to-one map $i_v:E_v\to \S_v$. 
    \end{itemize}
\end{itemize}
\end{definition}

We denote by $\overline{\rm Mod}_X$ the space of trees of spheres marked by $X$.
For $e\in E_v$, we say that $i_v(e)$ is the attaching point of $e$ on $v$. We extend $i_v$ to $T\setminus \{v\}$ by setting $i_v(v'):=i_v(e)$ if $v'\in B_v(e)$. We denote by $a_v$ the application $i_v|_X$.
When the tree has a unique internal vertex $v$, we identify it to the marked sphere $a_v$.

\begin{definition}
A sequence of marked spheres $a_n:X\to{\mathbb S}_n$
  converges to a tree of sphere ${\cal T}^X$
  if for all internal vertex $v$
 of $\T^X$ , there exists an isomorphism $M_{n,v}:{\mathbb S}_n\to{\S}_v$
    such that $M_{n,v} \circ a_n$
     converges to $a_v$. 
     \end{definition}
(We prefer to use the notation $\S_n$ instead of $\S$ or $\P^1\C$ because the $\S_n$ should be thought distinct.)

We keep in this paper this notion of convergence witch is not Hausdorff. In fact this notion is Hausdorff after passing to a quotient by the appropriated notion of isomorphism of trees of spheres. We will not develop this notion of isomorphism in this paper but the interested reader can take a look to \cite{A3}.

\subsection{Berkovich space}

Recall that $\LL$ is a special field that contains Laurent series without essential singularities. We are going to explain how is constructed a Berkovich space over $\LL$. For simplicity we assume in this section that $\LL$ is the set of these series.  Let us define a norm on this space. Let
$$c:=\sum_{k\geq k_0}c_{j} t^{k}\in \LL\quad \text{with}\quad k_0\in\Z\text{ and }c_{k_0}\neq0.$$

We define $$|c|_\LL:=exp({-k_0}).$$
Setting $|0|_\LL=0$, we assure that $|.|_\LL$ is a norm on $\LL$. Moreover, it satisfies an inequality stronger then the usual triangular inequality, called the ultrametric or non-Archimedean  inequality and that defines $\LL$ to be a non-Archimedean field:
 $$\forall a,b\in\LL, |a+b|_\LL\leq \rm{max}(|a|_\LL,|b|_\LL).$$ 

This inequality, is very strange for people who are not used to it. For example I have here to say that this paper design by "closed" (resp. "open") ball the ball define by a large (resp. strict) inequality because both of them are open and closed for this topology.
Also, if we denote by $B(z,r)$ the closed ball of center $z\in\LL$ and radius $r\geq0$, we have the surprising following lemma.

\begin{lemma}\label{ball} Given $z,z'\in\LL$ and $r\geq0$, we have
$$z\in B(z',r)\implies B(z,r)=B(z',r).$$
\end{lemma}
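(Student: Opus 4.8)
# Proof Proposal

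The plan is to prove the claim by a direct computation using the non-Archimedean inequality. The statement is the classical fact that in an ultrametric space "every point of a ball is a center"; the proof reduces to a symmetric application of the strong triangle inequality together with the observation that balls of the same radius are either disjoint or equal.

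First I would unwind the hypothesis: $z\in B(z',r)$ means $|z-z'|_\LL\leq r$. By symmetry of the absolute value, $|z'-z|_\LL=|z-z'|_\LL\leq r$, so also $z'\in B(z,r)$. Next I would prove one inclusion, say $B(z,r)\subseteq B(z',r)$: take any $w\in B(z,r)$, so $|w-z|_\LL\leq r$; writing $w-z'=(w-z)+(z-z')$ and applying the ultrametric inequality gives $|w-z'|_\LL\leq\max(|w-z|_\LL,|z-z'|_\LL)\leq r$, hence $w\in B(z',r)$. Then I would run exactly the same argument with the roles of $z$ and $z'$ exchanged — this is legitimate precisely because the first step showed the hypothesis is symmetric — to get $B(z',r)\subseteq B(z,r)$. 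Combining the two inclusions yields $B(z,r)=B(z',r)$.

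There is essentially no obstacle here: the only thing one must be slightly careful about is the symmetry observation, so that the "same argument with roles reversed" is actually available, and the trivial bookkeeping of which of $|w-z|_\LL$, $|z-z'|_\LL$ the maximum picks out — but since both are $\leq r$, the bound $\leq r$ holds regardless. I would present it in two or three lines without belaboring it, perhaps remarking that this is the phenomenon that makes the tree structure of the Berkovich line appear: nested balls behave combinatorially rather than metrically, so inclusion of balls is governed entirely by whether one center lies in the other ball and by comparison of radii.
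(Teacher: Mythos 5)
Your proof is correct and follows essentially the same route as the paper: apply the ultrametric inequality to $w-z'=(w-z)+(z-z')$ to get one inclusion, then use the symmetry of the hypothesis to get the reverse. You are slightly more careful than the paper, which only writes out one inclusion explicitly and leaves the symmetric half implicit.
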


\begin{proof}
Suppose that $|z-z'|\leq r$. Then
$$x\in B(z,r)\implies |x-z'|_\LL\leq \max (|x-z|_\LL,|z-z'|_\LL)\leq r\implies x\in B(z',r).$$
\end{proof}

This means that every element in a closed ball is the center of this ball and, as a consequence, that the intersection of two balls is empty or one ball is included in the other. It will play a major role in the understanding of tree structure in the Berkovich space.

To define precisely what is this Berkovich space, we would have to talk about valuations, semi norms but one would want first to forget about the formalism and concentrate on the keys for understanding this space. Let's denote by $\P^1_\LL\approx\LL\cup\{\infty\}$ the projective space over $\LL$.

\begin{definition}
  $ \P^1 _{Berk}$ is a compactification of $\P^1_\LL$ called the Berkovich projective line.
 \end{definition}
 
Here is the important remark due to Berkovich:

\begin{proposition}\label{prethm}
Almost every element of $ \P^1 _{Berk}$ is identified to a closed ball 
$$B(z_0,r)=\{z\in \P^1_\LL ~|~ |z-z_0|\leq r \}.$$
\end{proposition}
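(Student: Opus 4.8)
The plan is to make the word ``identified'' precise by realizing $\P^1_{Berk}$ as the space of multiplicative seminorms $\|\cdot\|$ on the polynomial ring $\LL[z]$ whose restriction to the constants is $|\cdot|_\LL$ (this is the affine part; the extra point $\infty$, and the Berkovich points clustering near it, are handled symmetrically through the coordinate $1/z$). To each closed ball $B=B(z_0,r)$ with $z_0\in\LL$ and $r\ge 0$ I would attach the seminorm $\|P\|_B:=\max_i|a_i|_\LL\,r^i$, where $P=\sum_i a_i(z-z_0)^i$ is the expansion of $P$ at the center $z_0$. The four steps are then: (1) this is well defined and extends $|\cdot|_\LL$; (2) it is multiplicative, hence really an element of $\P^1_{Berk}$; (3) the assignment $B\mapsto\|\cdot\|_B$ is injective; (4) describe the complement of its image, which is exactly what forces the cautious word ``almost.''

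For (1), the only subtlety is that the formula refers to the center: but if $z_0'\in B$ then Lemma \ref{ball} says the ball is the same, and re-expanding $P$ at $z_0'$ through the binomial formula together with the ultrametric inequality shows $\max_i|a_i'|\,r^i=\max_i|a_i|\,r^i$. On constants the value is $|a_0|_\LL$, and $\|P+Q\|_B\le\max(\|P\|_B,\|Q\|_B)$ is immediate. Step (3) is then short: from $\|\cdot\|_B$ one recovers the radius as $r=\inf_{\zeta\in\LL}\|z-\zeta\|_B$ and then the ball itself as $\{\zeta\in\LL:\|z-\zeta\|_B\le r\}$; since two closed balls are nested or disjoint (the remark following Lemma \ref{ball}), distinct balls are separated in this way.

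The hard part is (2), which is the non-Archimedean Gauss lemma. With $u=z-z_0$, $P=\sum_i a_iu^i$, $Q=\sum_j b_ju^j$, let $p$ and $q$ be the \emph{largest} indices realizing the maxima defining $\|P\|_B$ and $\|Q\|_B$ respectively. In the coefficient $\sum_{i+j=p+q}a_ib_j$ of $u^{p+q}$ in $PQ$, every term except $a_pb_q$ becomes strictly smaller in absolute value once multiplied by $r^{p+q}$, by maximality of $p$ and $q$; the equality case of the ultrametric inequality then gives that this coefficient has absolute value exactly $|a_p|_\LL|b_q|_\LL$, so $\|PQ\|_B\ge\|P\|_B\|Q\|_B$. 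The reverse inequality is the trivial term-by-term bound. All the rest is bookkeeping; this identity is the real content of Berkovich's remark, and proving it cleanly — choosing the correct indices $p,q$ and invoking the equality case rather than just the inequality — is the step I expect to take the most care.

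Finally, for (4): the seminorms missed by this construction are those built from a strictly decreasing sequence of closed balls $B_1\supsetneq B_2\supsetneq\cdots$ with $\bigcap_n B_n=\emptyset$, via $\|P\|:=\lim_n\|P\|_{B_n}$ (the sequence $\|P\|_{B_n}$ is nonincreasing, so the limit exists, and one checks it is again a multiplicative seminorm); such sequences exist precisely because $\LL$, although complete for $|\cdot|_\LL$, is not spherically complete. These ``exotic'' points, together with $\infty$, are what remains of $\P^1_{Berk}$. That they are the \emph{only} missing ones is the full Berkovich classification theorem; here only the easy direction — every ball determines a point of $\P^1_{Berk}$ — is needed for what follows, which is why the statement is phrased with ``almost every.''
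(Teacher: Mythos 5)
Your construction is correct and is essentially the one the paper itself adopts, though only later: Proposition \ref{prethm} is stated without proof in the text, and its justification is deferred to Section \ref{Chap7}, where the seminorms $|P|_{B(z_0,r)}$ of Example \ref{sminorm} and the classification statement of Theorem \ref{thmberk0} are quoted from Baker--Rumely rather than proved. Your write-up supplies exactly the details the paper omits: the well-definedness of $\|\cdot\|_B$ under change of center via Lemma \ref{ball}, the Gauss-lemma argument for multiplicativity (your choice of the \emph{largest} indices realizing the maxima, together with the equality case of the ultrametric inequality, is the standard and correct way to get $\|PQ\|_B\geq\|P\|_B\|Q\|_B$; just note separately the degenerate case $r=0$, where the seminorm is evaluation at $z_0$ and multiplicativity is immediate), and the recovery of $B$ from $\|\cdot\|_B$, which gives injectivity directly without even needing the nested-or-disjoint dichotomy. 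The one point to be explicit about is that the qualifier ``almost every'' carries the full weight of the classification theorem: to know that the balls account for everything except $\infty$ and the type IV points coming from decreasing sequences with empty intersection, one needs the surjectivity direction of Theorem \ref{thmberk0}, which you defer to the literature --- exactly as the paper does --- so your proof establishes the easy inclusion (every ball gives a distinct point of $\P^1_{Berk}$) and correctly isolates what remains to be cited.
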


From this we can deduce that $\P^1 _{Berk}$ has a real tree structure.
A way to represent the space $\P^1 _{Berk}$ is to consider it as the quotient of the product $" {\LL} \times (\R^+\cup\{\infty\} )"$ (Center$\times$ Radius) by the relation provided in Lemma \ref{ball}. The real structure of this space comes from the vertical line that all behave well to the quotient.

For example take $a\neq b$ in $\LL$. Consider the balls $B(a,r)$ and $B(b,r)$ for the radii $r\in[0,\infty]$. Define $r_0:=|b-a|_\LL$. For $r<r_0$ the balls $B(a,r)$ and $B(b,r)$ are disjoint but for $r\geq r_0$ they are equal according to Lemma \ref{ball} (cf Figure \ref{Berkline}).

Hence we have to think about $\P^1 _{Berk}$ as real lines attached to
the elements of $\LL\cup\{\infty\}$, identified to the balls of radius $0$ or infinity, parametrized by the radius of the balls and with intersecting points at each values of $|a-b|_\LL$ for some $a$ and $b$ in $\LL$ (cf Figure \ref{P1brek}).

\begin{remark} Proposition \ref{prethm} gives a good idea of what is the all space. For the purpose of proving that trees of spheres are included in the Berkovich space, it will be sufficient to consider the elements mentioned in this statement. We will give in Section \ref{Chap7} a complete version of this theorem.The points that corresponds to balls of radius 0 are identified to elements of $\P^1\LL$  and are called type I. The non branching points are said to be type II and the branching ones are type III. The points that we omitting are some ends of the Berkovich tree. These points are called the type IV points. We will totally abuse and design in this paper by "end" only the type I points.
\end{remark}

\begin{figure}[h]
\begin{minipage}[c]{.45\linewidth}
\begin{center}

\includegraphics[width=6cm]{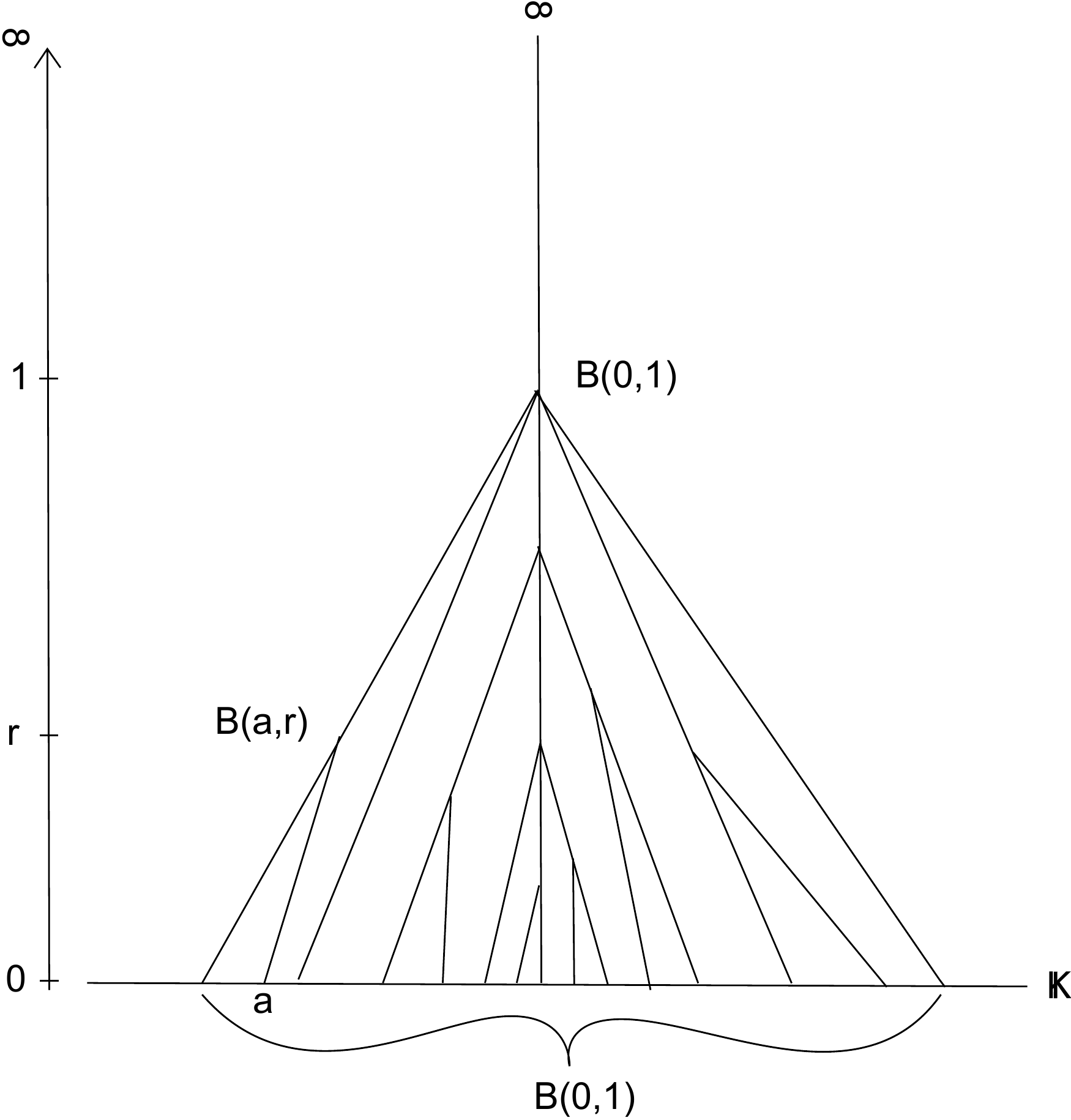}  \caption{}\label{Berkline}

\end{center}
\end{minipage}
\hfill
\begin{minipage}[c]{.45\linewidth}
\begin{center}

\includegraphics[width=6.5cm]{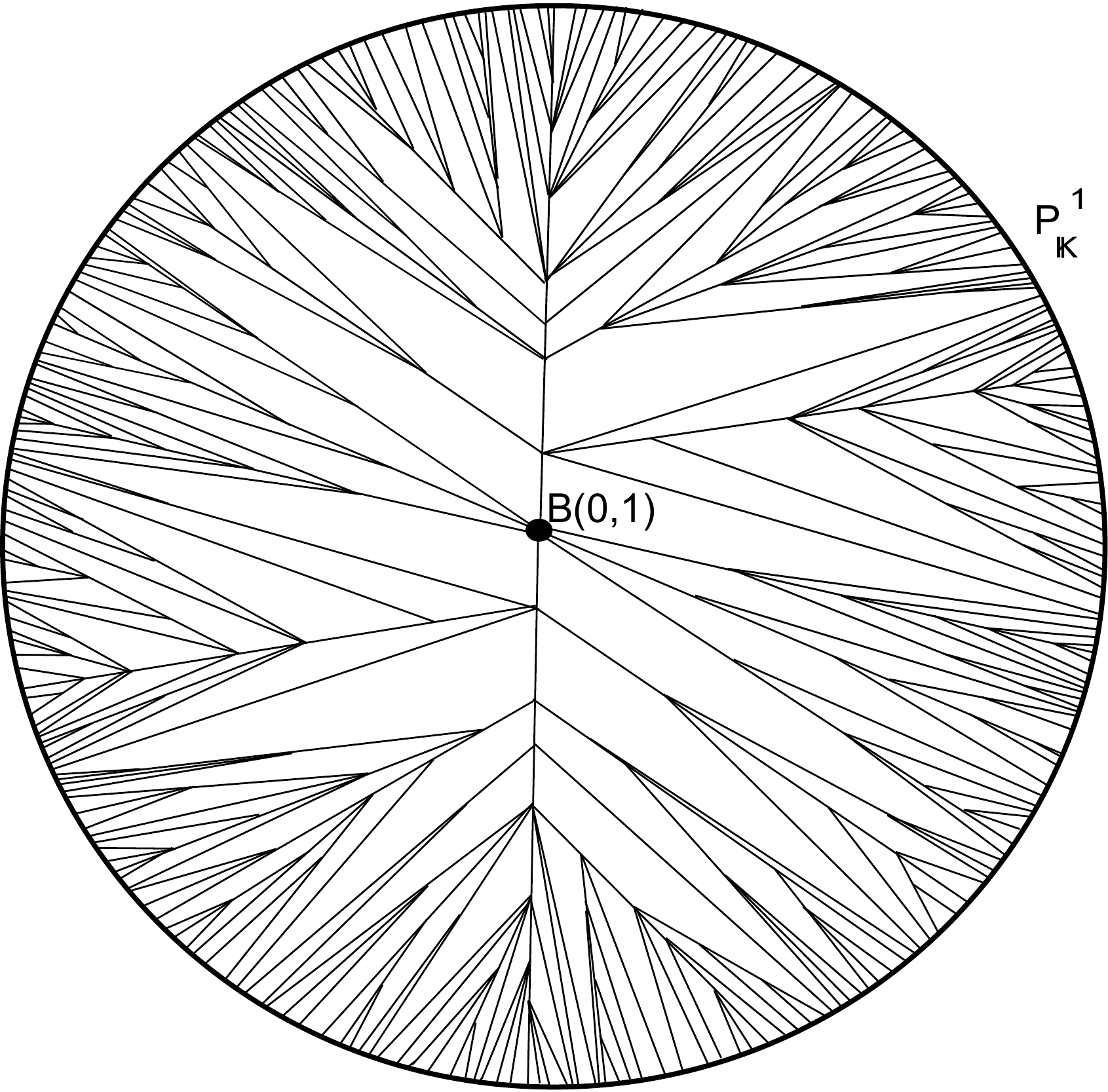}  \caption{ }\label{P1brek}

\end{center}
\end{minipage}
\end{figure}

\subsection{Limits and Berkovich space}

Now, let us look at the branching points in $ \P^1 _{Berk}$. We saw that they occur at some points $B(a,|a-b|_\LL)$ for some $a$ and $b$ in $\LL\cup\{\infty\}$. Up to a change of coordinate in $\P^1_\LL$, we can consider that this ball is the (closed) ball $${\zeta}:=B(0,1)$$ called the Gauss point.

This point is very particular from an algebraic point of view as we will see the formalism in detail. The elements of $\zeta$ have the following form:
$$c=\sum_{k\geq0}c_{k} t^{k} \text{ with all } c_k\in \C.$$
We define a map $\rho:\P^1 _{Berk}\to\P^1\C$ by $\rho(c)= c_0$ for $c\in\zeta$ and $\rho(c)=\infty$ for $c\notin \zeta$. Note that when it is well defined, applying $\rho$ corresponds to taking the limit.

\begin{lemma} Two elements of $\P^1\LL$ are in the same branch of $\zeta$ if and only if they have the same image by $\rho$. This gives a canonical identification between the branches at $\zeta$ and $\P^1\C$.
\end{lemma}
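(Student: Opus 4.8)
The plan is to unwind the definitions of "branch at $\zeta$" and of the map $\rho$, and then exploit Lemma \ref{ball} to match them up. First I would set up notation: for two type I points $z, z' \in \P^1_\LL$, being "in the same branch of $\zeta$" means that the path from $z$ to $z'$ in the real tree $\P^1_{Berk}$ does not pass through the Gauss point $\zeta = B(0,1)$; equivalently, the smallest ball containing both $z$ and $z'$ is strictly contained in $B(0,1)$, i.e. there is some ball $B$ with $z, z' \in B \subsetneq \zeta$. By the picture of $\P^1_{Berk}$ as (Center $\times$ Radius) modulo Lemma \ref{ball}, the smallest such ball is $B(z, |z-z'|_\LL)$ (using Lemma \ref{ball} to see every point of that ball is a center), so the condition becomes simply $|z - z'|_\LL < 1$ when both $z,z' \in \zeta$, together with the separate observation that $\infty$ and any point outside $\zeta$ form their own branches.

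Next I would treat the two cases according to whether the points lie in $\zeta$. If $z \notin \zeta$, i.e. $|z|_\LL > 1$ (taking the relevant coordinate), then $\rho(z) = \infty$; and $z' \notin \zeta$ likewise gives $\rho(z') = \infty$. Two such points $z, z'$: the path between them — does it cross $\zeta$? Since both lie on the part of the tree "beyond" $\zeta$ away from $B(0,1)$... actually here one must be slightly careful, as the complement of $\zeta$ has many branches too. The cleanest route is: the branch of $\zeta$ containing $\infty$ is exactly $\{w \in \P^1_\LL : |w|_\LL > 1\} \cup \{\infty\}$, because after the change of coordinate $w \mapsto 1/w$, these are the points $w'$ with $|w'|_\LL < 1$, handled by the in-$\zeta$ case. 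So it remains to do the main case: $z, z' \in \zeta$, both of the form $\sum_{k \geq 0} c_k t^k$. Then $\rho(z) = c_0(z)$, $\rho(z') = c_0(z')$, and $|z - z'|_\LL < 1 \iff k_0(z-z') \geq 1 \iff$ the constant term of $z - z'$ vanishes $\iff c_0(z) = c_0(z') \iff \rho(z) = \rho(z')$. This is the heart of the equivalence, and it falls out directly from the definition of $|\cdot|_\LL$.

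Then I would assemble these pieces: $z, z'$ are in the same branch of $\zeta$ iff (both are in $\zeta$ with equal constant terms) or (both are outside $\zeta$ in the $\infty$-branch), and in each case this is exactly the statement $\rho(z) = \rho(z')$ — noting $\rho(z) = \infty$ precisely when $z \notin \zeta$. Finally, for the "canonical identification" clause, I would observe that $\rho$ restricted to $\P^1_\LL$ is surjective onto $\P^1\C$ (given $c_0 \in \C$, the constant series $c_0$ maps to it, and $\infty \mapsto \infty$), so it factors through a bijection between the set of branches at $\zeta$ and $\P^1\C$; canonicity is clear since $\rho$ was defined without choices once the coordinate normalizing the branch point to $\zeta$ is fixed.

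The main obstacle I anticipate is purely expository: pinning down precisely what "same branch of $\zeta$" means in terms of balls, since the excerpt has only informally described the tree structure via Lemma \ref{ball} and the Center $\times$ Radius picture. Once one commits to the characterization "$z, z'$ lie in a common ball strictly smaller than $B(0,1)$" (or the mirror statement at $\infty$), everything else is a one-line computation with the valuation. I would therefore spend most of the proof carefully justifying that characterization of branches from Lemma \ref{ball}, and keep the valuation computation short.
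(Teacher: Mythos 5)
Your proposal is correct and follows essentially the same route as the paper: both arguments split into the case of points outside $\zeta$ (all lying in the single branch containing $\infty$, where $\rho=\infty$) and the case of two points $a,b\in\zeta$, where the branch point separating them is $B(a,|a-b|_\LL)$ and the condition $|a-b|_\LL<1$ is exactly the equality of constant terms, i.e.\ $\rho(a)=\rho(b)$. The only cosmetic difference is that you justify the $\infty$-branch via the coordinate change $w\mapsto 1/w$, whereas the paper observes directly that $\zeta\subset B(0,|d|_\LL)$ for $|d|_\LL>1$.
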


\begin{proof} For $d\in\P^1\LL\setminus\zeta$, we have $|d-0|_\LL>1$ so 
$$\zeta\subset B(0,|d|_\LL)=B(0,|d|_\LL)$$
 and all the elements $ B(d,0)\approx d$ with $|d|_\LL>1$ are in a same branch containing $\infty$.

For $a,b\in\zeta$, we saw before that $B(a,r)=B(b,r)\subset \zeta$ is a branching point separating $a\approx B(a,0)$ and $b\approx B(b,0)$. Thus $a$ and $b$ are in the same branch if and only if $|b-a|_\LL<1$. The result follows by remarking that $|b-a|_\LL<1$ if and only if the corresponding series have the same constant term.
\end{proof}

We denote the set of branches at $\zeta$ by $T_{\zeta} \P^1 _{Berk}$. It is usually called the tangent space at $\zeta$. More generally, for any branching point $a\in\P^1_{Berk}$, we denote $T_{\zeta} \P^1 _{Berk}$ its tangent space, ie the set of branches at $a$.

\begin{remark}\label{remopenball}
Note that the intersection every branch at a branching point $B(a,r)$ (for $a\in\P^1_\LL$ and $r>0$) with $\P^1\LL$ is empty or an open ball. Indeed, after a change of coordinate we can suppose that this branch $B$ does not contains $\infty\in\P^1\LL$. Take $a'\in B\cap\LL$. Then $B(a,r)\cap\LL$ is the increasing union of the closed balls $B(a',r')$ for $0\leq r'<r$.
\end{remark}

\subsection{Comparison}\label{compspace}

In order to be able to compare these two worlds, we have to first consider families of marked spheres instead of sequences. We suppose that for all $t\in D(0,1)\setminus \{0\}$, the maps $i_t:X\to\S_t$ are injections from the finite set $X$ to some spheres that we identify to the Riemann sphere $\C\cup\{\infty\}$. For every $a\in X$ we denote by $a_t$ the element $i_t(a)$. We suppose that the $a_t$ depend holomorphicaly  on $t$.

Then naturally, every $a_t$ for $a\in X$ is identified to an element of $\LL$.
Now take two other elements $b$ and $c$ in $X$ and identify them as for $a$ to elements $b_t$ and $c_t$ in $\LL$. Note that $a_t, b_t$ and $c_t$ are also identified to ends of the tree $\P^1_{Berk}$, thus there is a unique branching point $v\in \P^1_{Berk}$ separating them. Denote by $\S_v$ the tangent space $T_v\P^1_{Berk}$. 
Applying a Moebius transformation $M_{v,t}$ with coefficients in $\LL$ we can suppose that $a_t=1$, $b_t=1$ and $c_t=\infty$. Then $v=\zeta$ so we can deduce that $\rho(a_t)=\lim_{t\to0}a_t$. 
It follows that two elements of $M_{v,t}\circ i_t(X)$ have the same limit on $\S_v$ if and only if they are in the same branch of $\S_v$. 

Every sphere $\S_v$ in $\T^X$ separates at least three elements of $X$, so for each sphere we can associate a vertex $v$ in $\P^1_{Berk}$ and its tangent space to $\S_v$ and the notions of convergence corresponds: for every vertex in $\P^1_{Berk}$ corresponding to a vertex in $\T^X$, there is a Moebius transformation $M_t$ such that $a$ and $b\in X$ are in the same branch if and only if $M_t\circ i_t(a_t)$ and $M_t\circ i_t(b_t)$ have same limit.

\begin{remark}\label{actionMoeb} We can define an action of the space of Moebius transformations of $\P^1\LL$ on the branching points of $\P^1_{Berk}$. Indeed, given a branching point $v$, it separates at least three ends of the tree that we can identify to points of $\LL$. Then there exists a unique Moebius transformation $M_t$ that send these three points to $0,1$ and $\infty$. We define the image of $\zeta$ by $M_t$ to be $v$ and this is sufficient to define a transitive action on the all space of branching points of $\P^1_{Berk}$.

Because of this and as we saw in Remark \ref{zoomstof} that a sequence (or a family in this setting) of Moebius transformations can be interpret as a rescaling, we can think of the set of branching points of $\P^1_{Berk}$ as a space of all the possible zooms or rescalings. In fact we could go much further and explicit a relation between these Moebius transformations and blows-up.
\end{remark}


\section{Maps}\label{Chap3}

\subsection{Trees of spheres covers}

We want to introduce maps between trees of spheres. For this it is convenient to first look at this notion in the case of marked spheres.
Recall that the motivation to look at marked spheres was to mark critical points and periodic points. Hence, if we wanna consider sequences of rational maps and consequently sequences of marked spheres, it is clear that all of the maps in the same sequence should have the same number of critical points and the same number of other marked points, ie the same combinatorial datas.

We fix two finite sets $Y$ and $Z$ with at least three elements.
We define a portrait ${\bf F}$ of degree $d\geq 2$ to be a couple $(F,\deg)$ where 
\begin{itemize}
\item $F:Y\to Z$ is a map between two finite sets $Y$ and $Z$ and
\item $\deg:Y\to \N-\{0\}$ is a function that satisfies
\[\sum_{a\in Y}\bigl(\deg(a) -1\bigr) = 2d-2\quad\text{and}\quad \sum_{a\in F^{-1}(b)} \deg(a) = d\quad\text{ for all } b\in \Z.\] 
\end{itemize}
Typically, $Z\subset \S$ is a finite set, $F:Y\to Z$ is the restriction of a rational map $F:\S\to \S$ to $Y:=F^{-1}(Z)$ and $\deg(a)$ is the local degree of $F$ at $a$. In this case, the Riemann-Hurwitz formula and the conditions on the function $\deg$ implies that $Z$  contains the set $V_F$ of the critical values of $F$ in order to let $F:\S-Y\to \S-Z$ be a cover. 

\begin{definition}
A rational map marked by ${\bf F}$ is a triple $(f,y,z)$ such that $f\in \Rat_d$, $y:Y\to \S$ and $z:Z\to \S$ are marked spheres, and the following diagram commutes: 

\centerline{
$\xymatrix{
 Y \ar[r]^{y}    \ar[d] _{{ F}} &\S  \ar[d]^{f} \\
      Z \ar[r]_{z}  &\S
  }$} with $\deg_{y(a)}f = \deg(a)$ for all $a\in Y$. 
\end{definition}

Now we want to consider a sequence $(f_n,i_n,j_n)_n$ of rational maps marked by the same portrait ${\bf F}$ and understand how the trees of spheres are going to help us to define an interesting limit to it. The main idea comes from the following lemma:

\begin{lemma}\cite{A2}\label{postcomp}
Let $(f_n: \S \to  \S )_n$ be a sequence of rational maps of same degree. Then, there exists a subsequence $(f_{n_k})_{n_k}$ and a sequence of Moebius transformations $(M_{n_k})_{n_k}$ such that $ (M_{n_k}\circ f_{n_k})_{n_k}$ converges to a non constant rational map $f$  uniformly outside a finite number of points.
\end{lemma}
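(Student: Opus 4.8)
\textbf{Plan for the proof of Lemma \ref{postcomp}.}

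The plan is to view each $f_n$ as a point in $\Rat_d$, the space of degree-$d$ rational maps, realized concretely via its homogeneous coefficients. Writing $f_n = [P_n : Q_n]$ with $P_n, Q_n$ homogeneous of degree $d$ in two variables, the pair $(P_n, Q_n)$ is determined up to a common scalar, so the coefficient vector lives in $\P^{2d+1}\C$. First I would normalize: choose a representative of $(P_n, Q_n)$ whose coefficient vector has sup-norm (or Euclidean norm) equal to $1$. By compactness of the unit sphere in $\C^{2d+2}$, after extracting a subsequence the normalized coefficient vectors converge to some limit $(P, Q)$, again homogeneous of degree $d$, not both identically zero. This limit defines a rational map $f = [P:Q]$ \emph{provided} $P$ and $Q$ have no common factor; in general $P$ and $Q$ may share a common homogeneous factor $H$ of degree $k \geq 0$, and then $f := [P/H : Q/H]$ is a rational map of degree $d - k \geq 0$.

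The next step is to post-compose to restore degree $d$ and non-constancy. The issue is that the limit $f$ may have dropped degree (if $k \geq 1$), or may even be constant (if $k = d$). Here is where the Moebius transformations $M_{n_k}$ enter: the point of allowing post-composition is precisely to rescale so that the limiting map does not degenerate. Concretely, the locus where $(P_n, Q_n)$ converges to something with a common factor corresponds to coefficients that shrink at different rates; one chooses $M_{n_k}$ to be a diagonal-type Moebius transformation $z \mapsto \lambda_{n_k} z$ (or more generally an affine map) whose scaling factor $\lambda_{n_k}$ is calibrated to the relative sizes of the coefficients of $P_{n_k}$ and $Q_{n_k}$, so that $M_{n_k} \circ f_{n_k}$ has normalized coefficient vectors converging to a pair with \emph{no} common factor. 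This is a standard rescaling argument; the cleanest way to organize it is to induct on the degree of the common factor of the naive limit: if the naive limit already has no common factor we are done, and otherwise a suitable rescaling strictly decreases the degree of the common factor of the new limit, so the process terminates with a limit $f$ that is a genuine rational map of degree exactly $d$, hence non-constant since $d \geq 1$.

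Finally, I would address the mode of convergence. Once $M_{n_k} \circ f_{n_k} = [\tilde P_{n_k} : \tilde Q_{n_k}]$ with normalized coefficients converging to $(\tilde P, \tilde Q)$ coprime of degree $d$, the convergence of coefficients is uniform convergence of the homogeneous polynomials on the unit sphere of $\C^2$, hence locally uniform convergence of the ratio $\tilde P_{n_k}/\tilde Q_{n_k}$ to $\tilde P/\tilde Q$ on the complement of the zero set of $\tilde Q$. The only points where uniform convergence can fail are the (at most $d$) zeros of $\tilde Q$, which is exactly the "finite number of points" in the statement; near such a point one passes to the chart at $\infty$ and runs the same argument with $\tilde Q_{n_k}/\tilde P_{n_k}$, noting that $\tilde P$ and $\tilde Q$ have no common zero. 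I expect the main obstacle to be making the rescaling step in the second paragraph fully rigorous — i.e., exhibiting explicitly the Moebius transformations that cancel the common factor of the naive limit and proving the induction on the degree of that common factor terminates — rather than the compactness or the convergence-mode bookkeeping, which are routine.
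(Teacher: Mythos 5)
The paper itself gives no proof of Lemma \ref{postcomp} (it is quoted from \cite{A2}), so I can only assess your argument on its own merits. Your first paragraph is the correct starting point: it is exactly DeMarco's description of the boundary of $\Rat_d$ inside $\P^{2d+1}\C$, where the naive limit is $H\cdot\hat f$ with $H$ the common factor and the convergence to $\hat f$ is locally uniform away from the zeros of $H$. The problem is the key step of your second paragraph: it is \emph{not} true that post-composition can always produce a limit of degree exactly $d$ (equivalently, a coefficient limit with trivial common factor), so your induction on the degree of the common factor has no reason to terminate at degree $0$. Take $f_n(z)=z+\frac{1}{nz}$, of degree $2$, whose two critical points $\pm n^{-1/2}$ both tend to $0$. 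Post-composition by a Moebius transformation does not move critical points. If some $M_{n_k}\circ f_{n_k}$ converged outside a finite set to a map $g$ of degree $2$, then (after a further extraction) its coefficient limit in $\P^{5}\C$ would have to be $g$ itself with no common factor, the convergence would then be uniform on all of $\S$, and the critical points of $g$ would be limits of those of $f_{n_k}$, i.e.\ a double critical point at $0$ --- impossible, since a degree-$2$ rational map has two distinct simple critical points. Here the naive limit $z$ already satisfies the lemma, but the example shows that the inductive step ``a suitable rescaling strictly decreases the degree of the common factor'' fails in general; the lost degree genuinely migrates to other spheres of the tree and cannot be recovered by post-composition alone.

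This leaves the actual content of the lemma unaddressed: what to do when the naive limit $\hat f$ is \emph{constant}. The mechanism is not a dilation calibrated to coefficient sizes, but a normalization of three image values: one picks points $a_1,a_2,a_3$ away from the holes such that $f_n(a_1),f_n(a_2),f_n(a_3)$ are (along a subsequence) pairwise distinct --- possible because fibers of a degree-$d$ map have at most $d$ points --- lets $M_n$ be the Moebius transformation sending these three values to $0,1,\infty$, extracts a convergent subsequence in $\P^{2d+1}\C$, and then argues that the $a_i$ can be chosen to avoid the at most $d$ holes of the new limit, which therefore attains the values $0,1,\infty$ and is non-constant. Making that choice non-circular (the holes depend on $M_n$, which depends on the $a_i$) is the real work, and it is absent from your sketch. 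A smaller correction: in your last paragraph the exceptional finite set is not the zero set of $\tilde Q$ --- if $\tilde P,\tilde Q$ are coprime of degree $d$ the convergence is uniform on all of $\S$ for the spherical metric --- it is the zero set of the common factor of the limiting coefficient vector, which is precisely what persists in the true statement.
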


This mean that a sequence of rational map is never really diverging to a constant but its natural image can disappears to another scale. After passing to a subsequence $i_n$ converges to a tree of sphere $\T^Y$. In our case, each sphere $\S_v$ of $\T^Y$ corresponds to a scale $M_{n,v}$ and another map corresponding to this scale which is $f_n\circ M_{n,v}^{-1}$. This lemma insure that by looking at the right scale for the image we get a non constant limit. 

Now, the magic of the right definitions works and it follows that if $j_n$ converges to a tree of spheres $\T^Z$, then the scales represented corresponding to the spheres of $\T^Z$ are exactly the ones given by Lemma \ref{postcomp}.


\begin{theorem}\label{toscover} Let $(f_n,i_n,j_n)_n$ be a sequence of rational maps marked by the same portrait ${\bf F}$.
If 
$([i_n],[j_n])\to([{\cal T}^Y],[{\cal T}^Z])\in \overline{\rm Mod}_Y\times \overline{\rm Mod}_Z,$
then $(f_n,i_n,j_n)_n$ converges to a cover between trees of spheres $${\cal F}:{\cal T}^Y\to {\cal T}^Z.$$

\end{theorem}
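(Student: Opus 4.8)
The idea is to build the cover $\mathcal{F}$ vertex by vertex: Lemma \ref{postcomp} supplies the rational maps between the spheres, the convergence hypotheses supply the combinatorics, and the fact (built into the notion of portrait) that all critical points are marked is what forces the two sides to fit together. Fix, for each internal vertex $v$ of $T^Y$, an isomorphism $M_{n,v}:\S_n\to\S_v$ realizing $[i_n]\to[\T^Y]$, and for each internal vertex $w$ of $T^Z$ an isomorphism $M'_{n,w}:\S_n\to\S_w$ realizing $[j_n]\to[\T^Z]$. For a fixed $v$ the maps $g_{n,v}:=f_n\circ M_{n,v}^{-1}$ all have degree $d$, so by Lemma \ref{postcomp}, along a subsequence there are Moebius transformations $N_{n,v}$ and a finite set $E_v\subset\S_v$ with $N_{n,v}\circ g_{n,v}\to f_v$ uniformly outside $E_v$, $f_v$ a non-constant rational map. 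Since $T^Y$ has finitely many vertices, one extraction handles all of them; at the end one checks the limit cover does not depend on this extraction, so that the whole sequence converges.

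The crucial point is to identify the target. From $f_n\circ i_n=j_n\circ F$ one gets, for $a\in Y$, $N_{n,v}(j_n(F(a)))=(N_{n,v}\circ g_{n,v})(M_{n,v}(i_n(a)))$, and since $M_{n,v}(i_n(a))\to a_v(a)$, whenever $a_v(a)\notin E_v$ this tends to $f_v(a_v(a))$. So, after a further extraction, $N_{n,v}\circ j_n$ converges to a marking $\alpha_v$ of $Z$ with $\alpha_v(F(a))=f_v(a_v(a))$ off the bad set; since $\mathbf{F}$ is a portrait, $F:Y\to Z$ is onto, so the attaching points at $v$ are mapped by $f_v$ to the values of $\alpha_v$ on $Z$. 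Now I use that a portrait encodes that every critical point of $f_n$ is marked (that is the content of $\sum_{a\in Y}(\deg(a)-1)=2d-2$): by Hurwitz's theorem the critical points of $f_v$ are then limits of critical points of $N_{n,v}\circ g_{n,v}$, hence are among the attaching points at $v$. A non-constant rational map whose critical points are among at least three marked points cannot identify those marked points down to two — if it did, Riemann--Hurwitz would force it to be $z\mapsto z^{d_v}$ ramified exactly over the two images, leaving room for only two marked points upstairs. Therefore $\alpha_v$ separates at least three points of $Z$, so $N_{n,v}$ is, up to precomposition by Moebius maps tending to the identity, one of the normalizations recorded by $\T^Z$: there is a unique internal vertex $w$ of $T^Z$ with $\alpha_v=a_w$ up to isomorphism. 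Set $F(v):=w$, replace $N_{n,v}$ by $M'_{n,w}$, and read $f_v$ as a marked map $f_v:\S_v\to\S_{F(v)}$ with $M'_{n,F(v)}\circ f_n\circ M_{n,v}^{-1}\to f_v$ off $E_v$. This is the precise form of the assertion, sketched before the statement, that the scales produced by Lemma \ref{postcomp} on the source side are exactly those seen by $\T^Z$.

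The rest is bookkeeping. Because $f_v$ sends the attaching point $a_v(a)=i_v(e)$ of a branch $e$ at $v$ to $a_{F(v)}(F(a))$, which is an attaching point at $F(v)$, the assignments $v\mapsto F(v)$ and $a\mapsto F(a)$ on leaves glue into a map of trees $T^Y\to T^Z$ carrying branches to branches compatibly with $F$ (well-definedness comes from the branch/tangent-space identification and functoriality of limits). On the analytic side, applying Hurwitz's theorem and the argument principle to $M'_{n,F(v)}\circ f_n\circ M_{n,v}^{-1}\to f_v$ shows that multiplicities pass to the limit: each $f_v$ carries the local degrees prescribed along its branches by $\mathbf{F}$, and the fibre degrees and Riemann--Hurwitz contributions of the $f_v$ add up over $T^Z$ to $d$ and to $2d-2$ respectively, the missing parts being carried by critical points and preimages that escape to other vertices. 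Collecting $(F,(f_v)_v)$ gives a cover $\mathcal{F}:\T^Y\to\T^Z$, and by construction $(f_n,i_n,j_n)_n$ converges to it.

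I expect the identification of the target (the second paragraph) to be the main obstacle, in particular the passage from ``$f_v$ non-constant'' to ``$F(v)$ is a genuine internal vertex of $T^Z$'', which is exactly where one must use that the portrait marks all critical points, together with careful control of the finitely many exceptional points $E_v$ (one must also check that $\alpha_v$ is well behaved at attaching points lying in $E_v$) and the precise, Hausdorff-after-quotient meaning of convergence in $\overline{\rm Mod}_Z$.
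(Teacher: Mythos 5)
First, a point of reference: the paper does not actually prove Theorem \ref{toscover}; it states it after the heuristic discussion surrounding Lemma \ref{postcomp} and implicitly defers the argument to \cite{A2}. So your proposal can only be measured against the strategy the paper sketches, and at that level you are following the intended route: normalize the source at each internal vertex $v$, invoke Lemma \ref{postcomp} to produce a non-constant limit $f_v$ at a suitable rescaling of the target, identify that rescaling with an internal vertex of $\T^Z$, and verify the cover axioms. Your Riemann--Hurwitz argument that $f_v(a_v(Y))$ cannot have fewer than three elements (a degree $\geq 2$ map with at most two critical values is a power map, whose fibre over those two values has only two points, contradicting $\card\, a_v(Y)\geq 3$) is correct and is indeed the mechanism that forces $F(v)$ to be a genuine internal vertex rather than a degenerate marking of $Z$.

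However, the technical heart of the proof is exactly the part you defer, and it is a genuine gap rather than routine verification. Everything in your second paragraph hinges on controlling the exceptional set $E_v$. (i) Hurwitz's theorem only locates $\Crit(f_v)$ outside $E_v$, so the inclusion $\Crit(f_v)\subseteq a_v(Y)$ needs the additional fact that every point of $E_v$ (every ``hole'' where degree escapes) is itself an accumulation point of critical points of $N_{n,v}\circ f_n\circ M_{n,v}^{-1}$ whenever the limit is non-constant; this is a real lemma, and it is precisely where the hypothesis that the portrait marks all critical points is consumed, yielding $E_v\subseteq a_v(Y)$. (ii) Even granting that, your identity $\alpha_v(F(a))=f_v(a_v(a))$ is established only for $a_v(a)\notin E_v$, so the conclusion that $\alpha_v$ takes at least three values on $Z$ does not follow from $\card\, f_v(a_v(Y))\geq 3$ alone: you must control the images of marked points sitting at the holes, which is the content of the Branches and Annuli lemmas the paper alludes to in Section \ref{Chap5}. (iii) Cover axioms (2) and (3) --- adjacent attaching points to corresponding adjacent attaching points, and equal local degrees on both sides of an edge --- are not bookkeeping; they require an argument about the images of the annuli separating consecutive vertices. (iv) The theorem asserts convergence of the whole sequence, and ``checking the limit does not depend on the extraction'' is nontrivial since $f_v$ is not determined by its values at the attaching points. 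In short: right architecture and a correct key computation for the stability of the image vertex, but the steps you flag at the end as ``the main obstacle'' are exactly the ones a complete proof must supply.
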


Where covers between trees of spheres and convergence are defined below.

\begin{definition}[Cover between trees of spheres] 
 ${\cal F}:{\cal T}^Y\to {\cal T}^Z$ is : 
\begin{itemize}
\item a trees map $F:T^Y\to T^Z$ 
\item for $w:=F(v)$, 
a ramified cover $f_v: \S_v\to \S_w$ s.t.
\vskip0.2cm
\begin{enumerate}
\item the restriction $f_v: \S_v-a_v(Y)\to \S_w-a_w(Z)$ is a cover;
\vskip0.2cm
\item adjacent attaching points maps to the corresponding adjacent attaching points
\vskip0.2cm
\item same local degree on both sides of edges.  
\end{enumerate}
\end{itemize}
\end{definition}

\begin{definition}[Convergence]
Let ${ \F}:{\T}^Y\to { \T}^Z$ be a cover between trees of spheres of portrait ${\bf F}$. A sequence ${ \F}_n:=(f_n,a_n^Y,a_n^Z)$ of marked spheres covers converges to ${ \F}$ if their portrait is ${\bf F}$ and if for all pair of internal vertices $v$ and $w:=F(v)$, there exists sequences of isomorphisms $M_{n,v}^Y:\S_n^Y\to \S_v$ and $M_{n,w}^Z:\S_n^Z\to \S_w$ such that 
\begin{itemize}
\item $M_{n,v}^Y\circ a_n^Y:Y\to \S_v$ converges to $a_v^Y:Y\to \S_v$, 
\item $M_{n,w}^Z\circ a_n^Z:Z\to \S_w$ converges to $a_w^Z:Z\to \S_w$ and 
\item $M_{n,w}^Z\circ f_n\circ (M_{n,v}^Y )^{-1}:\S_v\to \S_w$ converges locally uniformly outside $Y_v$ to ${f_v:\S_v\to \S_w}$. 
\end{itemize}
\end{definition}


\begin{remark}These covers have natural properties such as
\begin{itemize}
\item combinatorial surjectivity (every sphere of $\T^Z$ has a preimage),
\item global degree (the global number of preimages of every point on a sphere of $\T^Z$ is constant) and
\item allowing Riemann-Hurwitz type formula for a well chosen equivalent of the Euler characteristic.
\end{itemize}
\end{remark}


\subsection{Maps on Berkovich space}

For every polynomial $P_t:=X^d+a_1.X^{d-1}+\ldots+a_{d}\in \LL[X]$  we have the explicit formula:
$$P(B(0,r))=B(a_d(t),\underset{i\in[1,d]}{max}(|a_i(t)|_\LL . r^{d-i})).$$
Hence $P_t$ defines a dynamic on $  \P^1 _{Berk}$. 

However the case of rational maps is a little more difficult but we can also associate a unique ball as an image to another one and thus define a dynamic on $\P^1 _{Berk}$. 
For this we have to remark that the map $F_t$ is well defined on $\P^1\LL$, and that for each branching point$a\in \P^1 _{Berk}$ all the open balls corresponding to the branches of $a$ (see Remark \ref{remopenball}) but a finite number of them map to open balls corresponding to the branches of a unique branching point of $\P^1 _{Berk}$. This is unfortunately not easy to prove, we would have to prove the following that we admit in this paper (cf \cite{Juan}).

\begin{lemma}
If an open ball maps to another open ball, then the corresponding branch maps to the other one.
\end{lemma}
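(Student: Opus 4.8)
The statement to prove is: if an open ball $B$ maps to an open ball $B'$ under $F_t$ (meaning $F_t(B)\subseteq B'$, or more precisely that $F_t$ maps $B$ onto $B'$), then the corresponding branch at the branching point carrying $B$ maps to the branch carrying $B'$. Since the paper says this result is \emph{admitted} (cited to \cite{Juan}), I would not attempt a self-contained proof of the underlying hard rigidity fact; rather I would explain the structure one would follow and isolate the genuinely difficult ingredient. The overall strategy is to translate the statement about branches into a statement about nested families of balls, use the explicit image formula for balls under polynomials (and its rational analogue) to control images of these families, and invoke continuity/monotonicity of the action of $F_t$ on $\P^1_{Berk}$.

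\textbf{Step 1: reduce to a normalized situation.} First I would use a change of coordinates in $\P^1_\LL$ on the source so that the branching point is the Gauss point $\zeta=B(0,1)$ and the relevant branch $B$ is an open ball not containing $\infty$, say $B=\{z : |z|_\LL < 1\}$ (the increasing union of the closed balls $B(0,r)$ for $r<1$, exactly as in Remark \ref{remopenball}). Similarly normalize on the target so that $B'$ is $\{w : |w|_\LL<1\}$, the branch at $\zeta$ on the image side corresponding to the class of series with vanishing constant term. After this normalization $F_t$ becomes some new rational map in $\LL(z)$, and the hypothesis is that $F_t(B)\subseteq B'$, i.e. $|F_t(z)|_\LL<1$ whenever $|z|_\LL<1$.

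\textbf{Step 2: pass from the open branch to the branching point.} The branch $B$ at $\zeta$ is realized in $\P^1_{Berk}$ as a half-line of closed balls $B(0,r)$, $0\le r<1$, abutting to $\zeta=B(0,1)$. I would apply the explicit formula for the image of a ball: writing $F_t$ locally near $0$ as a power series (or using the formula $P(B(0,r))=B(a_d(t), \max_i |a_i(t)|_\LL r^{d-i})$ in the polynomial case, and its rational-map refinement), one checks that for $r<1$ the image $F_t(B(0,r))$ is a closed ball contained in $B'$, and that as $r\to 1^-$ this image ball increases and its radius tends to the radius of $F_t(\zeta)$ — here one uses that $F_t$ acts continuously and monotonically along segments of the Berkovich line. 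Since $F_t(B(0,r))\subseteq B'$ for all $r<1$ and these balls exhaust the branch, the point $F_t(\zeta)$ is the branching point at the "mouth" of $B'$, i.e. $F_t(\zeta)=\zeta$ in the normalized target coordinates, and the whole branch $B$ is sent into the branch $B'$. This is the content of the claim.

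\textbf{Main obstacle.} The hard part — and the reason the paper admits this lemma — is Step 2's assertion that the action of a rational map on $\P^1_{Berk}$ is continuous and sends segments to segments monotonically, together with the precise control of how radii of balls transform under a \emph{rational} (not merely polynomial) map; for polynomials the displayed formula is elementary, but for rational maps one must first set up the theory of the reduction map, analyze where $F_t$ "moves" the Gauss point, and rule out the degenerate possibility that $F_t$ collapses the branch onto a type I point or folds it back. Establishing this rigorously requires the Berkovich-analytic machinery developed in \cite{Juan} (semi-norms, multiplicities, the fact that $F_t$ induces an open map on the Berkovich line), which is outside the scope of this pedagogical exposition; hence I would simply cite it and move on, as the authors do.
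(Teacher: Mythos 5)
The paper gives no proof of this lemma at all: it is explicitly admitted, with the sentence just before it deferring the argument to Rivera-Letelier \cite{Juan}, so your choice to cite the same reference for the analytic core rather than prove it is exactly the paper's approach, and your surrounding outline (normalize so the branching point is the Gauss point, use the image formula for balls, pass to the limit along the branch) is the standard route one would follow in \cite{Juan}. One small imprecision worth noting: the branch at $\zeta$ containing $0$ is the whole subtree of points $B(a,r)$ with $|a|_\LL<1$ and $r<1$, not merely the half-line of balls centered at $0$, though its intersection with $\P^1_\LL$ is indeed the open unit ball as in Remark \ref{remopenball}.
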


Thus, for a branching point $v$, this define almost everywhere a map 
$$D_{v}F_t:T_{v} \P^1 _{Berk}\to T_{F_t(v)} \P^1 _{Berk}.$$

\begin{example} cf Remark \ref{actionMoeb}.
\end{example}

As tangent spaces are related to limits, it is time to look at the meaning in terms of the map $\rho$. Recall that we defined a map $\rho:\LL\to\C$ which is the equivalent of taking the limit $t\to 0$. This map extends to $\rho:\LL[X]\to\C[X]$ by applying $\rho$ to the coefficients. In order to extend it to a map $$\rho:\LL(X)\to\C(X),$$ we have to be more careful and for each $F_t=P_t/Q_t$ with $P_t,Q_t\in \LL[X]$ we first have to simplify $P_t$ and $Q_t$ by the maximal power of $t$ that can be factored for both of them and, then, we can apply $\rho$ to the coefficients. Of course this is the equivalent of taking the limit when $t\to 0$ when it is meaningful.

Now we are ready to state the most important statement about functions acting on the Berkovich space.

\begin{theorem}\label{thmzeta}
 If $F_t(\zeta)=\zeta$, the tangent map $D_{\zeta}F_t:T_{\zeta} \P^1 _{Berk}\to T_{\zeta} \P^1 _{Berk}$
is a well defined non constant rational map and satisfies
 $$ D_{\zeta}F_t=\rho(F_t).$$ 
\end{theorem}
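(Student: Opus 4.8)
The plan is to compute the tangent map $D_\zeta F_t$ directly from the description of branches at $\zeta$ in terms of the reduction map $\rho$, and to check that it coincides with the rational map $\rho(F_t)$ obtained by clearing the maximal common power of $t$ from numerator and denominator. By the Lemma identifying branches at $\zeta$ with $\P^1\C$ via $\rho$, a branch at $\zeta$ is determined by a point $\alpha\in\P^1\C$: it is the set of $c\in\P^1\LL$ with $\rho(c)=\alpha$. So the first step is to make sense of what $D_\zeta F_t$ does to such a branch. Given a branch through a type I point $d$ with $\rho(d)=\alpha$, the branch (intersected with $\P^1\LL$) is an open ball by Remark \ref{remopenball}, and by the admitted Lemma its image under $F_t$ is again the open ball corresponding to a branch at $F_t(\zeta)=\zeta$; this image branch is determined by $\rho(F_t(d))$. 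So the content of the theorem is the identity $\rho(F_t(d))=\rho(F_t)(\rho(d))$ for $d$ in a suitable dense set of branches, together with the fact that the resulting map $\alpha\mapsto\rho(F_t)(\alpha)$ is a nonconstant rational map.

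First I would treat the polynomial case as a warm-up using the explicit formula $P(B(0,r))=B(a_d(t),\max_i(|a_i(t)|_\LL\, r^{d-i}))$: taking $r=1$ shows $P_t(\zeta)$ is the ball centered at $a_d(t)=P_t(0)$ of radius $\max_i|a_i(t)|_\LL$, so the hypothesis $F_t(\zeta)=\zeta$ forces $|a_i(t)|_\LL\le 1$ for all $i$ and $|a_d(t)|_\LL\le 1$, i.e. all coefficients lie in the ring $\zeta$ and $\rho(P_t)$ is genuinely a degree-$d$ polynomial (nonconstant). Then for $d$ with $\rho(d)=\alpha$, write $d=\alpha+\delta$ with $|\delta|_\LL<1$ (if $\alpha\ne\infty$) and compute $P_t(d)$: expanding, $\rho(P_t(d))=\rho(P_t)(\alpha)$ because the terms involving $\delta$ or the parts of the coefficients divisible by $t$ all have $|\cdot|_\LL<1$ and drop out under $\rho$. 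For the general rational case $F_t=P_t/Q_t$, after clearing the maximal common power of $t$ we may assume $\min(\text{val}(P_t),\text{val}(Q_t))=0$, so that at least one of $P_t,Q_t$ has a coefficient of norm $1$; the hypothesis $F_t(\zeta)=\zeta$ then must be used to show that in fact \emph{all} coefficients of both $P_t$ and $Q_t$ lie in $\zeta$ and that $\rho(P_t)$ and $\rho(Q_t)$ are not both vanishing at any common point of $\P^1\C$ — equivalently, that $\rho(F_t)=\rho(P_t)/\rho(Q_t)$ is a well-defined nonconstant rational map of $\P^1\C$ that preserves the Gauss point's reduction in the expected way. Concretely one argues: if $\rho(P_t)$ and $\rho(Q_t)$ had a common root $\beta$, or if $\rho(Q_t)$ dropped degree, then $F_t$ would move $\zeta$, contradicting the hypothesis — this is the step where the branch/ball dynamics genuinely enters. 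Once $\rho(F_t)$ is known to be a nonconstant rational map sending $\zeta$'s branches to $\zeta$'s branches, the identity $\rho(F_t(d))=\rho(F_t)(\rho(d))$ for type I points $d$ follows by the same substitution-and-reduction computation as in the polynomial case, using ultrametric estimates to discard lower-order terms.

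Finally I would assemble these: the tangent map $D_\zeta F_t$ acts on $T_\zeta\P^1_{Berk}\cong\P^1\C$, and on the dense set of branches corresponding to type I points it is given by $\alpha\mapsto\rho(F_t)(\alpha)$; since $\rho(F_t)$ is continuous (indeed a morphism of $\P^1\C$) this determines $D_\zeta F_t=\rho(F_t)$ everywhere, and nonconstancy has been established along the way.

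\textbf{Main obstacle.} The delicate point is not the reduction computation $\rho(F_t(d))=\rho(F_t)(\rho(d))$, which is a routine ultrametric estimate, but rather showing that the hypothesis $F_t(\zeta)=\zeta$ really does force $\rho(F_t)$ (defined by clearing the common $t$-power and reducing) to be a well-defined \emph{nonconstant} rational map — i.e. that no cancellation or degeneration occurs in the numerator and denominator after reduction. This requires a careful analysis of how $F_t$ acts on balls near $\zeta$, invoking the admitted Lemma relating images of open balls to images of branches, and is the place where the non-Archimedean geometry of $\P^1_{Berk}$ does the real work.
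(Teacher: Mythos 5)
The paper itself prints no proof of Theorem \ref{thmzeta}: it is stated and the underlying theory is deferred to \cite{Juan} (it is the classical reduction lemma, cf.\ also \cite{BR}). So I am comparing your proposal against the standard argument. Your skeleton is the right one: identify $T_\zeta\P^1_{Berk}$ with $\P^1\C$ via $\rho$, use that reduction is a ring homomorphism on $\{|a|_\LL\le 1\}$ to get $\rho(P_t(d))=\rho(P_t)(\rho(d))$ for type I points $d$ in $\zeta$, and determine $D_\zeta F_t$ on the dense set of such directions (keeping in mind that the admitted Lemma only controls all but finitely many branches, which suffices for this density argument).

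There is, however, a genuine error in exactly the step you single out as the delicate one. You claim that $F_t(\zeta)=\zeta$ forces $\rho(P_t)$ and $\rho(Q_t)$ to have no common root and forbids a degree drop. This is false. Take $F_t(z)=(z^2+t)/z$: the pair $(P_t,Q_t)$ is already correctly normalized and coprime over $\LL$, yet $\rho(P_t)=z^2$ and $\rho(Q_t)=z$ share the root $0$. Nevertheless $F_t$ fixes the Gauss point: by multiplicativity of the Gauss norm one checks $|F_t-a|_\zeta=\max(1,|a|_\LL,|t|_\LL)/1=1$ for every $a$ with $|a|_\LL\le1$, which characterizes $\zeta$. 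Here the direction $0$ is one of the finitely many bad branches whose image is not a single branch, and $D_\zeta F_t=z$ has degree $1<2$. So the statement you would need is only that $\rho(P_t)/\rho(Q_t)$ is nonconstant \emph{after} cancelling common factors; your contradiction argument asserts something stronger and false, and therefore does not establish nonconstancy. The standard repair is a different mechanism: if the cancelled reduction were a constant $c\in\C$, then $\rho(P_t)=c\,\rho(Q_t)$, so every coefficient of $P_t-cQ_t$ has norm $<1$, whence by multiplicativity of the Gauss norm $|F_t-c|_\zeta=|P_t-cQ_t|_\zeta/|Q_t|_\zeta<1$; this says $F_t(\zeta)$ lies strictly inside the branch of direction $c$, contradicting $F_t(\zeta)=\zeta$ (the case $c=\infty$ is handled by applying the same computation to $1/F_t$). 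With that substitution the rest of your argument goes through.
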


Hence we have the following corollary.

\begin{corollary}
If $deg(\rho(P_t)/\rho(Q_t))>0$ and $(F_t)_{t\in D(0,1)-\{0\}} $ is a holomorphic family in $\Rat_d$ with $F_t:\P^1_\LL\to\P^1_\LL$, then
$$F_t=\frac{P_t}{Q_t}\;
{\longrightarrow}\;\frac{\rho(P_t)}{\rho(Q_t)}=\rho(F_t).$$
\end{corollary}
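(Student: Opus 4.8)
The plan is to deduce this corollary directly from Theorem~\ref{thmzeta}, using the action of Moebius transformations on branching points described in Remark~\ref{actionMoeb} to reduce to the case of the Gauss point $\zeta$. First I would observe that the statement ``$F_t \longrightarrow \rho(F_t)$'' is exactly the assertion that the family $(F_t)$, viewed as a sequence of rational maps with coefficients in $\C$ depending holomorphically on $t$, converges (locally uniformly outside finitely many points, in the sense already used in Lemma~\ref{postcomp} and the convergence definitions of Section~\ref{Chap3}) to the rational map obtained by clearing the common power of $t$ from $P_t$ and $Q_t$ and setting $t=0$; by definition of the extension $\rho:\LL(X)\to\C(X)$, that limit is precisely $\rho(F_t)=\rho(P_t)/\rho(Q_t)$.

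The key step is to connect this limit to the tangent map $D_\zeta F_t$. Since $(F_t)$ is a holomorphic family in $\Rat_d$ and $\deg(\rho(P_t)/\rho(Q_t))>0$, the simplified numerator and denominator have no common factor of $t$, so $\rho$ applied coefficientwise to $P_t$ and $Q_t$ already gives a rational map of positive degree; in particular no rescaling of the image is needed, which means $F_t$ fixes the Gauss point: $F_t(\zeta)=\zeta$. (Concretely, $F_t$ maps the ball $B(0,1)$ to $B(0,1)$ because the reduction is a nonconstant rational map with no pole or zero forced to $0$ or $\infty$ as $t\to 0$; this is the translation of the hypothesis on degrees.) Once $F_t(\zeta)=\zeta$ is established, Theorem~\ref{thmzeta} applies verbatim and gives $D_\zeta F_t = \rho(F_t)$ as a nonconstant rational map.

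It then remains to recognize that $D_\zeta F_t$, the induced map on the tangent space $T_\zeta\P^1_{Berk}$, is the same object as the limit of the holomorphic family in the sphere picture. This is where the dictionary of Section~\ref{compspace} does the work: branches at $\zeta$ are canonically identified with $\P^1\C$ via $\rho$, and for a point $d\in\P^1\LL$ the branch it determines at $\zeta$ corresponds to $\rho(d)=\lim_{t\to0}d$; since $F_t$ sends the branch of $z$ to the branch of $F_t(z)$ (the admitted Lemma on open balls), the induced map on branches is exactly $\rho(d)\mapsto \lim_{t\to0}F_t(d)$, i.e.\ the pointwise limit of the family $F_t$ acting on $\P^1\C$. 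Comparing the two descriptions of $D_\zeta F_t$ yields $\lim_{t\to0}F_t = \rho(F_t)$, which is the claim.

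The main obstacle I expect is the first half of the middle paragraph: justifying cleanly that the degree hypothesis on the reduction forces $F_t(\zeta)=\zeta$, rather than $F_t$ sending $\zeta$ to some other branching point (which would happen if the naive coefficientwise reduction degenerated and a genuine rescaling of the image were required, as in the situation behind Lemma~\ref{postcomp}). Handling this carefully means checking that ``the common power of $t$ has already been cleared and the resulting reduction has positive degree'' is equivalent to ``$F_t$ preserves $B(0,1)$'', which uses the explicit description of images of balls together with the non-Archimedean estimates for $|P_t(c)|_\LL$ and $|Q_t(c)|_\LL$ on $\zeta$; everything after that point is a formal unwinding of definitions already set up in the paper.
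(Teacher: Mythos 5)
Your derivation follows the paper's intended route exactly: the corollary is stated as an immediate consequence of Theorem~\ref{thmzeta} (the paper gives no further proof beyond the remark that the convergence is locally uniform outside a finite set), and your key step---that $\deg(\rho(P_t)/\rho(Q_t))>0$, i.e.\ nonconstant reduction after clearing the common power of $t$, is equivalent to $F_t(\zeta)=\zeta$---is the correct and standard criterion. The only point you leave at the pointwise level is upgrading the convergence on branches to locally uniform convergence outside a finite set, which in the end comes from the convergence of the normalized coefficients themselves; the paper likewise only asserts this without proof.
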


In fact the convergence is local, uniform outside a finite set.


\subsection{Comparison}

Consider an holomorphic family 
$$f_t(z):=\frac{a_d(t)z^d+\ldots+a_1(t)z+a_0(t)}{b_d(t)z^d+\ldots+b_1(t)z+b_0(t)},\quad t\in D({0,1}),$$
such that $f_t\in\Rat_d$ if and only if $t\neq0$.

Fix a portrait ${\bf F}$ of degree $d$ and suppose that for $t\neq 0$, we can associate to $F_t$ two marqued spheres $i_t$ and $j_t$ as in the section \ref{compspace} and such that $(\F_t,i_t,j_t)$ is a rational map marked by ${\bf F}$. Then $i_t$ and $j_t$ converge respectively to some trees of spheres $\T^Y$ and $\T^Z$ that can be identified in $\P^1_{Berk}$ and, according to Theorem \ref{toscover}, there is a trees of spheres cover $\F:\T^Y\to\T^Z$.

Given a vertex $v\in T^Y$, there exist a moebius transformation $M_{v,t}$ that maps $\zeta$ to $v$ and $M_{F(v),t}$ the maps $\zeta$ to $F(v)$. Hence the map $$F _{v,t}:=M_{F(v),t}\circ F_t\circ M_{v,t}$$ fixes $\zeta$ and Theorem \ref{thmzeta} together with its corollary assures that $f_v$ and $D_\zeta F _{v,t}:T_{\zeta} \P^1 _{Berk}\approx\S\to T_{\zeta} \P^1 _{Berk}\approx\S$ are conjugated.


\subsection{Remarks about quotient}

Now we are ready go back to the original spaces. The sequences or families that we were studying are in $\rat_d$.
On the one had, for the trees of spheres point of view, we see that considering a sequence of dynamically marked rational map $(M_n\circ f_n\circ M_n^{-1},M_n\circ i_n,M_n\circ j_n)_n$ instead of $(f_n,i_n,j_n)_n$ does not really affect the notion of convergence as in that definition allows to pre-compose and post-compose $f_n$ when we want to prove that it converges in certain charts to the corresponding applications between two spheres.

On the other hand, for the Berkovich point of view, considering ${M_t\circ F_t\circ M_t^{-1}}$ instead of $F_t$ corresponds to making a change of coordinate in $\P^1\LL$ and by consequence changing the corresponding point $\zeta$. The new $\zeta$ corresponds to the image by $M_t$ of the old one in the corresponding coordinates.


\section{Dynamics and rescaling limits}\label{Chap4}

\subsection{Dynamics on trees}

The dynamics in the branching points of the Berkovich space is already well defined. However, it needs more work for the case of trees of spheres. Indeed, when we mark a rational map, we distinguish the domain of definition and the image of the map. For example, already at the level of a portrait, there is no meaning of cycle. For this we would need to have at least an identification of a subset of $Z$ into a subset of $Y$.
Hence we consider the case where there exists a set $X= Y\cap Z$ with at least three elements.
To be consistent, we say that a triple $(f,i,j)$ is a rational map dynamically marked by $({\bf F},X)$ if ${\bf F}$ is a portrait, $i|_X=j|_X$ and the following diagram commutes

{ $$\xymatrix{
    X\ar[r]\ar[rd]&Y \ar[r]^i \ar[d]_{ F}  & \S \ar[d]^f \\
    &Z \ar[r]_j & \S.
  }$$}

Before defining dynamics on trees of spheres, let us already go back to the Berkovich space point of view. Suppose that the $(f_t,i_t,j_t)$ are as in the previous section and that in addition they are rational map dynamically marked by $({\bf F},X)$. Then as before we have $\T^Y$ and $\T^Z$ in the Berkovich space. But now we have an additional information : the ends corresponding to the elements marked by $X$ are common to the trees $\T^Y$ and $\T^Z$. Thus we have to identify the spheres of the trees of spheres $\T^Y$ and $\T^Z$ that corresponds to branching points separating three elements of $X$.

\begin{definition}
A tree of spheres ${\cal T}^X$ is compatible with a tree of spheres ${\cal T}^Y$ if 
\begin{itemize}
\item $X\subseteq Y$, $IV^X\subseteq IV^Y$, 
\item for all internal vertex $v$ of $T^X$, we have 
\begin{itemize}
\item $\St^X_v=\St^Y_v$ and
\item$a_v^X=a_v^Y|_X $. 
\end{itemize}
\end{itemize}
\end{definition}

We write $\T^X\lhd \T^Y$ in this case. 

\begin{definition}
A dynamical system of trees of spheres is a pair $(\F,\T^X)$ such that
\begin{itemize}
\item ${\cal F}:{\cal T}^Y\to {\cal T}^Z$ is a cover between trees of spheres ;
\item $\T^X\lhd \T^Y$ and $\T^X\lhd\T^Z$. 
\end{itemize}
\end{definition}

We can define dynamics and iterate the spheres of $\T^Y$ as soon as their iterates lie in $\T^X$. If this is not the case, then we have to stop iterating, which is one of the inconvenient of this formalism.
We also have to adapt a little the notion of convergence.

\begin{definition}\label{defcvdyn}
Let $({\F}:{ \T}^Y\to { \T}^Z,{ \T}^X)$ be a dynamical system of trees of spheres with portrait ${\bf F}$. A sequence $(f_n,y_n,z_n)_n$ of dynamical systems between spheres marked by $({\bf F},X)$
converges to $(\F,\T^X)$ if  
$$\displaystyle (f_n,y_n,z_n)\underset{M_n^Y,M_n^Z}\longrightarrow{ \F}\quad\text{with}\quad M_{n,v}^Y=M_{n,v}^Z$$ for all vertex $v\in IV^X$.
\end{definition}

\subsection{Rescaling limits}

Let us now see what dynamics on these trees can tell about the divergence of the sequence of rational maps $(f_n,i_n,j_n)$ dynamically marked by a same portrait. Suppose that we have a sphere $\S_v$ of $\T^X$ of period $p$. We deduce from the definitions that the map
$$ (M^Z_{n,F^p(v)}\circ f_n\circ\ldots M^Z_{n,F^2(v)}\circ f_n\circ (M^Y_{n,F(v)})^{-1})\circ (M^Z_{n,F(v)}\circ f_n\circ (M^Y_{n,v})^{-1})$$
which is the same as the map $ M^Y_{n,v}\circ f^p_n\circ (M^Y_{n,v})^{-1}$
converges locally uniformly outside a finite number of point to a non constant map.

For a period $p$ branching point in $\P^1_{Berk}$ for some $F_t$, it is even easier: if $M_{t,v}$ maps $v$ to $\zeta$ then $M_{t,v}\circ F_t\circ M^{-1}_{t,v}$ fixes $\zeta$ so converges also locally uniformly outside a finite number of point to a non constant map.

These phenomena are meaningful from the dynamical point of view and such limits are called rescaling limits. As we can see, these limits come as cycles so there is a natural notion of dependance that follows from these (see \cite{K2} for the details). It is not the purpose of this paper to study rescaling limits. The interested reader can refer to \cite{K2} or to \cite{A1} and \cite{A2} for the results known today.


\section{Comparison of these approaches}\label{Chap5}

\subsection{Overview of the bridge}
Before comparing these two approaches, let's first resume the links that we proved between these two approaches.

First, we suppose that for all $t\in D(0,1)\setminus \{0\}$, the maps $i_t:Y\to\S_t$ and $j_t:Z\to\S_t$ are injections with finite sets $Y$ and $Z$ whose intersection $X$ contains at least 3 elements. For every element $a$ in these sets we denote by $a_t$ its image on $\S_t$ and we suppose that the $a_t$ depend holomorphicaly  on $t$ and that $i_t$ and $j_t$ converge to the respective trees of spheres $\T^Y$ and $\T^Z$. 

The elements in $i_t(Y)$ and $j_t(Z)$ are points in $\P^1\LL$ identifiable to ends in $\P^1_{Berk}$. 
Every internal vertex $v$ in $\T^Y$ (resp. $\T^Z$) separate three points in $Y$ (resp. $Z$) and corresponds to a unique branching points $B_v$ in $\P^1_{Berk}$. There exists Moebius transformation $M_{t,v}$ (not uniquely defined) that maps $\zeta$ to $B_v$. The spheres $\S_v$ and $T_{B_v}\P^1_{Berk}$ are identified.
The composition $M_{t,v}\circ i_t$ (resp. $M_{t,v}\circ j_t$) converges to the marking $a_v$ of $\S_v$ (modulo maybe a post composition by a constant Moebius transformation).

Consider an holomorphic family $F_t$ of rational map of degree $d$ when $t\neq0$, a portrait ${\bf F}$ and suppose that $(\F_t,i_t,j_t)$ is a rational map marked by ${\bf F}$. then there is a trees of spheres cover $\F:\T^Y\to\T^Z$. If $F(v)=w$ then $F_t(B_v)=B_w$ and $f_v=\rho( M_{w,t}^{-1}\circ F_t\circ M_{v,t})$ (up to a pre and post composition by some constant Moebius transformations).


\subsection{Advantages and disadvantages}

There are three advantages to use the trees of spheres point of view.
First, the vocabulary is quite simple and really matches the concern of people in holomorphic dynamics. 
Second, the proofs for the rescaling theorems are more combinatorics and topology: the analytic properties are hidden inside lemmas (see Branches and Annuli lemmas in \cite{A2}). 
Third, given a portrait ${\bf F}$, the space of rational maps dynamically marked by ${\bf F}$ comes with a topology described in \cite{A3} and the convergence arise in a nice analytic space when in the Berkovich point of view we provide a dynamics on a space only when we fix a diverging analytic family diverging, i.e. there is no topology and the way to diverge is fixed. Note that Jan Kiwi pointed out that the analytic condition is not a very big constraint as he proved that if a sequence for rational maps has a finite number of rescaling limits, then there exists a corresponding analytic family of rational maps that have the same behavior in the sense defined in Proposition 6.1 of \cite{K2}.

The price to pay in order to get a nice topology is to have to consider finite objects and force the dynamics to appear when in the Berkovich space the dynamics is directly well defined. The other problem of finiteness of these objects is that we have to make a choice of the points we are marking whereas in the Berkovich space in a certain sense all the possible analytical markings are already there. In a some sense, we can say that Berkovich spaces are useful to find an information whereas the trees of spheres are a more natural setting in order to write a convergence when we already have the informations we are interested in.

The other problem to force finiteness of the combinatorics is that the trees of spheres can just have a finite number of rescaling cycles. In \cite{K2}, J. Kiwi remarked that we don't know so far any example of sequences of rational maps that have infinitely many rescaling limits that are dynamically interesting (i.e. of degree at least two and not monomial). However, recent results from Cui G. and Peng W. in \cite{CP} involving the general idea of Shishikura trees (see \cite{Sh1} for a special case) let think that we could produce such examples. 

\section{The formalism of Berkovich spaces}\label{Chap7}


\medskip
\noindent{\textbf{The non-Archimedean field.}}

The field $\LL$ we consider is the completion of $\C\langle\langle t \rangle\rangle$ 
(the algebraic closure of the field of formal Puiseux series). Its elements looks like
$$a=\sum_{q_j\geq q_0}c_{j} t^{q_j}\in\L-\{0\}\quad\quad (c_{j}\in \C,\;c_0\neq0, \;\Q\ni q_j\nearrow\infty).$$ 

The non-Archimedean norm on this field corresponds to the vanishing order at $0$, i.e. $$|a|_\LL:=\rm{exp}(-q_0).$$
We denoted by $ \P^1 _\LL$ the projective space over $\LL$.

\medskip
\noindent{\textbf{The Berkovich projective line.}}

A multiplicative semi-norm on $\LL[X]$, is a function defined on ${|.|_x:\LL[X]\to\R^+}$ that satisfies:
\begin{enumerate}
\item$|0|_x=0$ ;
\item$|1|_x=1$ ;
\item$|fg|_x=|f|_x|g|_x$ and
\item$|f+g|_x\leq |f|_x+|g|_x$.
\end{enumerate}

\begin{example}\label{sminorm}
 Here are two fundamental example of such semi-norms::
\begin{itemize}
\item for every $a\in\LL$, we set $$P\to|P|_a:=|P(a)|_\LL,$$
\item for every $z_0\in \LL$ and every $r\in\R^{\star+}$ we set 
$$B(z_0,r):=\{z\in\LL~|~|z-z_0|_\LL\leq r\}$$ and
$$P\to|P|_{B(z_0,r)}=\underset{z\in B(z_0,r)}{max}|P(z)|_\LL.$$
\end{itemize}
\end{example}

We define the analytical Berkovich space $\mathbb A^{1,{\rm an}}_\LL$ to be the set of multiplicative semi-norms on $\LL[X]$ whose restriction to $\LL$ is $|.|_\LL$ and it is equipped with the weak convergence topology. This space is called the Berkovich affine line. The following theorem due to Berkovich is the right formulation of Proposition \ref{prethm} (see \cite{BR}):

\begin{theorem}\label{thmberk0}
Every element $|.|_B$ of  $\mathbb A^{1,{\rm an}}_\LL$ is realizable as:
$$|P|_B=\lim_{i\to \infty}    \left( \underset{z\in B(z_i,r_i)}{max}|P(z)|_\LL\right).$$
for a decreasing sequence of ("closed") balls:
$$B(z_0,r_0)\supseteq B(z_1,r_1)\supseteq B(z_2,r_2)\supseteq\ldots\quad.$$
\end{theorem}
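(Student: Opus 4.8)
The plan is to follow the classical argument (as in Baker--Rumely \cite{BR}): to every multiplicative seminorm $|.|_B$ extending $|.|_\LL$ one associates a nested family of closed balls, and conversely one checks that such a family recovers the seminorm. First I would define, for each $a\in\LL$ and each $r\ge 0$, the "disc value" $\phi(a,r):=|X-a|_B$ if $r=|X-a|_B$... more precisely, observe that the function $r\mapsto |X-a|_B$ has no content yet, so instead one works with the quantity $\delta(a):=|X-a|_B$ for $a$ ranging over $\LL$. The key normalization lemma is: for $a,a'\in\LL$ one has $\delta(a')\le\max(\delta(a),|a-a'|_\LL)$ with equality unless $\delta(a)=|a-a'|_\LL$, which is just the ultrametric inequality applied to $X-a'=(X-a)+(a-a')$ inside the seminorm. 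Setting $r_B:=\inf_{a\in\LL}\delta(a)$, one picks a sequence $a_i$ with $\delta(a_i)\to r_B$ and $r_i:=\delta(a_i)$; Lemma \ref{ball} (the ultrametric ball lemma) then forces the closed balls $B(a_i,r_i)$ to be nested for $i$ large, after passing to a subsequence, giving the decreasing chain $B(z_0,r_0)\supseteq B(z_1,r_1)\supseteq\cdots$ with $z_i:=a_i$.

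Next I would prove that $|.|_B$ equals the limit of the ball-seminorms $|.|_{B(z_i,r_i)}$. For one inequality, since $z_i$ lies in $B(z_i,r_i)$ and multiplicativity reduces everything to linear factors, one shows $|P|_B=\prod|X-\alpha_j|_B\le\prod\max_{z\in B(z_i,r_i)}|X-\alpha_j|_\LL$ using $|X-\alpha|_B\le\max(|X-z_i|_B,|z_i-\alpha|_\LL)\le\max(r_i,|z_i-\alpha|_\LL)=\max_{z\in B(z_i,r_i)}|z-\alpha|_\LL$; here I use that $\LL$ is algebraically closed so every $P$ splits, together with $|\alpha|_B=|\alpha|_\LL$ for scalars. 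For the reverse inequality one uses the extremal/minimizing property of $r_B$: for a monic linear factor $X-\alpha$, if $\alpha\notin B(z_i,r_i)$ then $|z_i-\alpha|_\LL>r_i\ge r_B$ and one can show $|X-\alpha|_B=|z_i-\alpha|_\LL$ eventually (the value has "stabilized"), while if $\alpha\in B(z_i,r_i)$ then $|X-\alpha|_B\le r_i$ but also, by minimality of $r_B$ and the equality case of the normalization lemma, $|X-\alpha|_B\ge r_B$; taking $i\to\infty$ so that $r_i\to r_B$ pins the product to the claimed limit. Assembling the linear factors via multiplicativity yields $|P|_B=\lim_i |P|_{B(z_i,r_i)}$.

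Finally I would verify the converse direction, that any decreasing sequence of closed balls defines via the displayed limit an actual element of $\mathbb A^{1,\rm an}_\LL$: the limit exists because $r\mapsto\max_{z\in B(z_i,r_i)}|P(z)|_\LL$ is monotone along the nesting, multiplicativity of each $|.|_{B(z_i,r_i)}$ is the content of Example \ref{sminorm} (a polynomial attains its sup-norm over a ball and sup-norms are multiplicative over ultrametric balls), and the seminorm axioms pass to the limit; the restriction to $\LL$ is $|.|_\LL$ since constants have constant value. The main obstacle I anticipate is the reverse inequality in the middle paragraph — proving that $|.|_B$ is genuinely dominated by (not merely bounded above by) the ball-seminorms requires the "stabilization" argument showing that once $r_i$ drops below $|z_i-\alpha|_\LL$ the value $|X-\alpha|_B$ is exactly $|z_i-\alpha|_\LL$, which rests on the strict ultrametric inequality (equality in $\max$ when the two terms differ) and is the step where the non-Archimedean hypothesis is essential. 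I would also need to handle the point at infinity and the projective statement by the usual chart change $X\mapsto 1/X$, noting that the Gauss point and type IV ends are exactly the cases where the chain has $r_B>0$ with empty intersection.
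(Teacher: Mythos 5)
The paper does not actually prove Theorem \ref{thmberk0}: it is quoted as Berkovich's classification theorem with a pointer to \cite{BR}, so there is no internal proof to compare against. Your plan is essentially the standard argument from that reference, and its main lines are sound: set $\delta(a):=|X-a|_B$, observe that any two balls $B(a,\delta(a))$, $B(a',\delta(a'))$ are nested because $|a-a'|_\LL=|(X-a')-(X-a)|_B\le\max(\delta(a),\delta(a'))$ combined with Lemma \ref{ball}, extract a decreasing chain realizing $r_B=\inf_a\delta(a)$, and verify the limit formula one linear factor at a time using that $\LL$ is algebraically closed and that the sup-norm on a ball is multiplicative (Gauss lemma). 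The dichotomy you describe for the reverse inequality ($\alpha$ eventually outside the balls versus $\alpha$ in all of them) is exactly the right case split, and the type IV points and the chart at infinity are handled as you say.

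There is one genuine gap to flag. Every ultrametric manipulation you perform on $|.|_B$ itself --- the ``normalization lemma'' $\delta(a')\le\max(\delta(a),|a-a'|_\LL)$ with its equality case, the stabilization $|X-\alpha|_B=|z_i-\alpha|_\LL$ when $|z_i-\alpha|_\LL>r_i$, and the nesting of the balls --- presupposes that $|.|_B$ satisfies $|f+g|_B\le\max(|f|_B,|g|_B)$, whereas the paper's axiom (4) only grants the Archimedean bound $|f+g|_x\le|f|_x+|g|_x$. You must first prove that a multiplicative seminorm whose restriction to $\LL$ is $|.|_\LL$ is automatically non-Archimedean; this follows from $|f+g|_B^n=|(f+g)^n|_B\le\sum_{k}|\binom{n}{k}|_B\,|f|_B^k|g|_B^{n-k}\le(n+1)\max(|f|_B,|g|_B)^n$ (binomial coefficients are nonzero complex constants, hence of $\LL$-norm $1$), followed by taking $n$-th roots and letting $n\to\infty$. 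A smaller repair: to obtain an honestly decreasing chain you should pass to a subsequence along which the $r_i$ are monotone, after which the comparison lemma (the ball of larger radius contains the other) orders them correctly; ``nested for $i$ large'' is not quite the right formulation. With these two points supplied, your outline is a complete proof.
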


In particular, if the intersection of the $B(z_i,r_i)$ is not empty we are in one of the examples cited in Example \ref{sminorm}. Otherwise, the point is said to be type IV point and it corresponds to a limit of the other points.


The projective line $\P^1 _{Berk}$ is obtained from $\mathbb A^{1,{\rm an}}_\LL$ by adding the function mapping all non identically zero polynomial  to $\infty$ and $z\in\LL$ to $|z|_\LL.$ It follows that this space is compact.

\medskip
\noindent{\textbf{Reduction.}} 

On $\mathbb A^{1,{\rm an}}_\LL$, there is a point that plays a an important role from an algebraic point of view: $\cal O_\LL:=\{  z\in\LL~|~|z|_\LL\leq 1 \}$ (or $\zeta$ in the previous sections). Indeed, the elements of ${\cal O_\LL}$ form a local ring and are of the form:
$$c_0+\sum_{q_j>0}c_{q_j} t^{q_j} .$$
This ring has a maximal ideal ${\frak M}:=\{  z\in\LL~|~|z|_\LL< 1 \}$ corresponding to the elements whose series has constant term 0.  Hence there is a natural field to consider called the residue field ${\cal O_\LL/ \frak M}$ and the natural projection called the reduction $\rho:\cal B_G\to \cal O_\LL/ \frak M\approx \C$ defined by 
$$c_0+\sum_{q_j>0}c_{j} t^{q_j} \longmapsto c_0.$$

Note that this makes sense even if the series cannot be identified to a holomorphic map.

\medskip
\noindent{\textbf{Dynamics.}}

We recalled that for $P_t:=X^d+a_1(t).X^{d-1}+\ldots+a_{d}(t)\in \LL[X]$ with $\forall i, a_i(t)\in\LL$, we can prove that 
$$P(B(0,r))=B(a_d(t),\underset{i\in[1,d]}{max}(|a_i(t)|_\LL . r^{d-i})).$$
In fact there is a well defined dynamics of $P_t$ on $\P^1 _{Berk}$t:
 if $|.|_x$ is a multiplicative semi-norm on $\LL[X]$ then $P(|.|_x)$ is the semi-norm that maps $Q \in\LL[X]$ to $|Q \circ P |_x$.

Note that proving that rational maps with coefficients in $\LL$ define a dynamics on $\P^1_{Berk}$ remains complicated and the interested reader can refer to \cite{Juan}.


\end{document}